\newcommand{\dom}{\mathbb{D}}
\newcommand{\id}{\operatorname{Id}}
\newcommand{\Io}{\mathbf   I}
\newcommand{\Fo}{\mathbf   F}
\newcommand{\Ro}{\mathbf   R}
\newcommand{\Ao}{\mathbf   A}
\newcommand{\Bo}{\mathbf   B}
\newcommand{\Mo}{\mathbf   M}
\newcommand{\boldz}{\boldsymbol z}
\newcommand{\bxi}{\boldsymbol \xi}
\newcommand{\boldeta}{\boldsymbol \eta}
\newcommand{\boldkappa}{\boldsymbol \kappa}
\newcommand{\boldd}{\boldsymbol d}
\newcommand{\signal}{\boldsymbol x}
\newcommand{\data}{\boldsymbol y}
\newcommand{\ndata}{\boldsymbol{y}^{\delta}}
\newcommand{\sph}{\mathbb{S}}
\newcommand{\X}{\mathbb X}
\newcommand{\Y}{\mathbb Y}
\newcommand{\Po}{\mathbf   P}
\newcommand{\Uo}{\mathbf   U}
\newcommand{\Vo}{\mathbf   V}
\newcommand{\Wo}{\mathbf   W}
\DeclareMathOperator*{\argmin}{arg\,min}
\newcommand{\al}{\alpha}
\newcommand{\la}{\lambda}
\newcommand{\La}{\Lambda}
\newcommand{\Soft}{\mathbb{S}}
\newcommand{\soft}{\operatorname{soft}}
\newcommand{\K}{\mathbb{K}}
\newcommand{\R}{\mathbb{R}}
\newcommand{\N}{\mathbb{N}}
\newcommand{\half}{\mathbb{H}}
\newcommand{\edot}{\, \cdot \, }
\newcommand{\Sign}{\operatorname{Sign}}
\newcommand{\sign}{\operatorname{sign}}
\newcommand{\range}{\operatorname{ran}}
\newcommand{\supp}{\operatorname{supp}}
\colorlet{lred}{red!40}
\colorlet{lgreen}{green!40}
\colorlet{lblue}{blue!40}
\newcommand\abs[1]{\left\vert#1\right\vert}
\newcommand\sabs[1]{{\lvert#1\rvert}}
\newcommand\norm[1]{{\left\Vert#1\right\Vert}}
\newcommand\snorm[1]{\Vert#1\Vert}
\newcommand{\enorm}{\left\|\;\cdot\;\right\|}
\newcommand\set[1]{{\left\{#1\right\}}}
\newcommand{\kl}[1]{\left(#1\right)}
\newcommand{\sset}[1]{\{#1\}}
\newcommand\inner[2]{\left\langle#1,#2\right\rangle}
\newcommand\sinner[2]{\langle#1,#2\rangle}
\def\plus{{\boldsymbol{\texttt{+}}}}
\newcommand\rmd[1]{\, \mathrm d #1}
\newtheorem{theorem}{Theorem}
\newtheorem{lemma}[theorem]{Lemma}
\newtheorem{proposition}[theorem]{Proposition}
\newtheorem{remark}[theorem]{Remark}
\newtheorem{example}[theorem]{Example}
\newtheorem{definition}[theorem]{Definition}
\newtheorem{algorithm}[theorem]{Algorithm}
\numberwithin{equation}{section}
\numberwithin{theorem}{section}
\title{Sparse regularization of inverse problems  by operator-adapted frame thresholding}
\author{J\"urgen Frikel}
\affil{Department of Computer Science and Mathematics, OTH Regensburg\authorcr
Galgenbergstra{\ss}e 32, D-93053 Regensburg, Germany
\authorcr
{\tt  juergen.frikel@oth-regensburg.de}
}
\author{Markus~Haltmeier}
\affil{Department of Mathematics, University of Innsbruck\authorcr
Technikerstrasse 13, 6020 Innsbruck, Austria
\authorcr
{\tt  markus.haltmeier@uibk.ac.at}
}
\date{}
\begin{document}
\maketitle

\begin{abstract}
We analyze sparse frame based regularization of inverse problems by means of a diagonal frame decomposition (DFD) for the forward operator, which generalizes the SVD. The DFD allows to define a non-iterative (direct) operator-adapted frame thresholding approach which we show to provide a convergent regularization method with linear convergence  rates.  These results will be compared to the well-known analysis and synthesis variants of sparse
$\ell^1$-regularization which are usually implemented thorough iterative schemes.
If the frame is a basis (non-redundant case), the three versions of sparse regularization, namely synthesis and analysis variants of $\ell^1$-regularization as well as the DFD thresholding are equivalent.  However, in the  redundant case, those three approaches are pairwise  different.
\end{abstract}

\section{Introduction}
\label{sec:bio:introduction}

This paper is concerned with inverse problems of the form
\begin{equation}\label{eq:ip}
	\ndata  =  \Ao \signal + z  \,,
\end{equation}
where $\Ao \colon \dom(\Ao) \subseteq \X \to \Y$ is a linear operator between Hilbert spaces, and $z$ denotes the data distortion (noise).
We allow unbounded operators and assume that $\dom(\Ao)$ is dense. Moreover, we assume that the unknown object $\signal$ is an element of a closed subspace  space $ \X_0 \subseteq  \X$ on which $\Ao$ is bounded.
We are particularly interested in problems, where \eqref{eq:ip}
is ill-posed in which case the solution of
\eqref{eq:ip} (if existent) is either not unique or the solution operator is not continuous (hence, the solution process is unstable with respect to data perturbations). In order to stabilize the inversion of \eqref{eq:ip} one has to apply regularization methods, cf. \cite{EnglHankeNeubauer,SchGraGrosHalLen09}. The basic idea of regularization is to include a-priori information about the unknown object into the solution process.

In this paper, we use sparsity based regularization, where the a-priori assumption on the unknown object is  sparsity of $\signal$  with respect to a frame  $(u_\la)_{\la \in \La}$ of $\X$, cf. \cite{SchGraGrosHalLen09,grasmair2008sparse,daubechies2004iterative,lorenz2008convergence,ramlau2006tikhonov,burger2016complex}. That is, we regularize the recovery of $\signal$ from measurements \eqref{eq:ip} by enforcing sparsity of $\signal$ with respect to a suitably chosen frame of $\X$. Sparse regularization is well investigated and has been applied to many different imaging problems, and by now there are many algorithms available that implement sparse regularization. However, when dealing with frames, there are at least two fundamentally different concepts implementing sparsity, namely the synthesis and the analysis variant. The reason for this lies in the fact that expansions of $\signal\in \X$ with respect to frames are not unique (which is in contrast to basis expansions). In the synthesis variant, it is assumed that the unknown
is a sparse linear combination of frame elements, whereas, in the analysis variant, it is required that the inner products $\inner{u_\la}{\signal}$ with respect to a given frame are sparse.
The difference between these approaches has been pointed out clearly in \cite{elad2017analysis}.

Sparse regularization is widely used in inverse problems as it provides good regularization results and is able to preserve or emphasize features (e.g., edges) in the reconstruction. However, this often comes at the price of speed, since most of the algorithms implementing sparse regularization are based on variational formulations that are solved by iterative schemes.

In the present paper, we investigate a third variant of sparse regularization that is based on an operator-adapted diagonal frame decomposition (DFD) of the unknown object, cf. \cite{donoho19995nonlinear,candes2002recovering,colonna2010radon} and which allows to define a direct (non-iterative) sparse regularization method. In the noise-free case ($z=0$), explicit reproducing formulas for the unknown object can be derived from the DFD, where the frame coefficients of $\signal$ are calculated directly from the data $\data = \Ao\signal$. In the presence of noise ($z\neq 0$), regularized versions of those formulas are obtained by applying component-wise soft-thresholding to the calculated coefficients, where the soft-thresholding operator is defined as follows:

 \begin{definition}[Soft-thresholding]
 Let $\La$ be some index set.
\begin{itemize}
\item For
$\eta, d \in \K$ let  $\soft( \eta, d )
\coloneqq  \sign(\eta)
\max \set{0,  \abs{\eta} - d }$.

\item
For $\boldeta, \boldd \in \K^\Lambda$ we define
the  component-wise  soft-thresholding by
\begin{equation} \label{eq:soft2}
\Soft_{\boldd} (\boldeta)
\coloneqq
\kl{\soft \kl{ \eta_\la ,  d_\la }}_{\la \in \La } \,.
\end{equation}
\end{itemize}
\end{definition}

Here and below we  define  $\sign(\eta) \coloneqq  \eta / \abs{\eta}$ for $\eta \in  \K \setminus \{ 0 \}$ and $\sign(0) \coloneqq 0$.  The advantage of the DFD-variant of sparse regularization lies in the fact that it admits a non-iterative (direct) and fast implementation of sparse regularization that can be easily implemented for several inverse problems.

We point out, that the three variants of sparse regularization (mentioned above) are equivalent if orthonormal bases are used instead of frames, but they are fundamentally different in the redundant case.

As the main theoretical results in this paper we show that the third variant of sparse regularization, which we call
\emph{DFD-thresholding}, defines a convergent regularization method and we derive linear convergence rates for sparse solutions.
For the basis case, the same results follow from existing results of $\ell^1$-regularization \cite{daubechies2004iterative,grasmair2008sparse,grasmair2011necessary}. In the redundant case, the results follow from \cite{grasmair2011necessary} for the synthesis approach and from \cite{haltmeier2013stable} for the analysis approach. In case of DFD-thresholding, we are not aware of any results concerning convergence analysis or convergence rates.

\subsection*{Outline}

This paper is organized as follows.
In Section~\ref{sec:bfd} we define the
diagonal frame decompositions of operators and give several examples  of  diagonal frame
expansions   for  various operators
using wavelet, curvelet, and  shearlet
frames.       In Section~\ref{sec:ell1}
we review the convergence theory of
$\ell^1$-regularization and the convergence rates.
In Section~\ref{sec:bft}, we show that
DFD-thresholding is a convergent regularization method and derive its convergence rates.

\section{Diagonal frame decomposition}
\label{sec:bfd}

In this section, we introduce the concept of operator adapted diagonal frame decompositions (DFD) and discuss some classical examples of such DFDs in the case of the classical 2D Radon transform and the forward operator of photoacoustic tomography with a flat observation surface.

\subsection{Formal definition}

We define the operator adapted diagonal frame decomposition as a generalization of the wavelet vaguelette decomposition and the biorthogonal curvelet or shearlet decompositions to general frames, cf. \cite{donoho19995nonlinear,candes2002recovering,colonna2010radon}.

\begin{definition}[Diagonal frame decomposition (DFD)]
Let $(u_\la)_{\la \in \La} \in \X^\Lambda$, $(v_\la)_{\la \in \La} \in \Y^\Lambda$ and let $(\kappa_\la)_{\la \in \La}$ be a family of positive numbers. For a linear operator $\Ao \colon \dom(\Ao) \subseteq \X \to \Y$, we call $(u_\la, v_\la, \kappa_\la )_{\la \in \La}$ a \emph{diagonal frame decomposition (DFD) for $\Ao$}, if the following conditions hold:
\begin{enumerate}[label=(D\arabic*), leftmargin=3em]
\item  $(u_\la)_{\la}$ is a frame of $\X$,
\item \label{b2} $(v_\la)_{\la}$ is a  frame of $\overline{\range(\Ao)} = \overline{\Ao(\X)}$,
\item \label{b3} $\forall \la \in \La \colon \kappa_\la \neq 0 \wedge \Ao^* v_\la = \kappa_\la u_\la$.
\end{enumerate}
\end{definition}

\begin{remark}
The DFD  generalizes the  singular value decomposition (SVD) and the wavelet-vaguelette decomposition (WVD) (cf. \cite{donoho19995nonlinear}) as it allows the  systems $(u_\la)_\la$ and $(v_\la)_\la$ to be non-orthogonal and redundant.
Note that by \ref{b2} and \ref{b3}, the  frame $(u_\la)_{\la}$ satisfies $u_\la \in \Ao^* ( \overline{\range(\Ao)} ) = \range(\Ao^*) $, where we have made use of the identity  $ \range(\Ao)^\bot = \ker(\Ao^*)$.  For typical inverse problems  this yields a notable smoothness assumption on the involved elements of the frame  $(u_\la)_{\la}$.

Although the SVD has proven itself to be a useful tool for analyzing and solving inverse problems it has the following drawbacks: First, ONBs that are provided by the SVD (though optimally adapted to the operator in consideration), in many cases, don't provide sparse representations of signals of interest and, hence, are not suitable for the use in sparse regularization. In particular, frames that provide sparse representation of signals (such as wavelets or wavelet-like systems) are often not part of the SVD.  Second, SVD is often very hard to compute and not known analytically for many practical applications.

To overcome some of those difficulties, wavelet-vaguelette decompositions were introduced. Nevertheless, this concept builds upon expansions of signals with respect to orthogonal wavelet-systems, which may not provide an optimal sparse representation of signals of interest, e.g., signals with sharp edges. Thus, by allowing general frames, the BCD offers great flexibility in the choice of a suitable function system for sparse regularization while retaining the advantages.
\end{remark}

\begin{definition}
    For a frame $(u_\la)_{\la \in \La}$ of $\X$, the \emph{synthesis operator} is defined as
   $\Uo \colon \ell^2(\La)\to \Y \colon
        \bxi
        \mapsto
        \sum_{\la \in \La} \xi_\la u_\la$
       and the corresponding \emph{analysis operator} is defined as its adjoint, $
        \Uo^\ast \colon \X \to \ell^2(\La)\colon  \signal
        \mapsto  (\inner{\signal}{u_\la})_{\la\in\La}$.
        \end{definition}

 In what follows, the synthesis operator of a frame will be always denoted with the corresponding upper case letter, e.g., if $(v_\la)_{\la\in\La}$ is a frame, then $\Vo$ denotes the corresponding synthesis and $\Vo^\ast$ the analysis operator.

    In order to simplify the notation, we will also refer to  $(\Uo, \Vo, \boldkappa )$ the as DFD instead of using the full notation $(u_\la, v_\la, \kappa_\la )_{\la \in \La}$.

If a DFD exists for an operator $\Ao$, it immediately gives rise to a \emph{reproducing formula}
\begin{equation}
\label{eq:reproducing formula}
    \signal = \sum_{\la\in\La} \inner{\signal}{u_\la} \bar u_\la =  \sum_{\la\in\La} \kappa_\la^{-1} \inner{\Ao\signal}{v_\la} \bar u_\la = \bar\Uo\circ\Mo^\plus_{\boldkappa}\circ\Vo^\ast(\Ao\signal),
\end{equation}
where $(\bar u_\la)_{\la \in \La}$ is the dual frame to $(u_\la)_{\la \in \La}$ (cf. \cite{Christensen2003}) and $\bar\Uo$ the corresponding synthesis operator.
Moreover, $\Mo^\plus_{\boldkappa}$ denotes the Moore-Penrose inverse of $\Mo_{\boldkappa}$ and performs point-wise division with $\boldkappa$, i.e.
\begin{equation}
\label{eq:mpi}
(\Mo^\plus_{\boldkappa} (\boldeta))_ \la
\coloneqq
\begin{cases}
(\eta_\la/\kappa_\la)_{\la\in \La} & \text{ if }
\kappa_\la \neq 0 \\
0 & \text{ otherwise } \,.
\end{cases}
\end{equation}
Hence, from given (clean) data $\data$, one can calculate the frame coefficient of $\signal$ and obtain a reconstruction via \eqref{eq:reproducing formula}. The key to the practical use of this reproducing formulas is the efficient implementation of the analysis and synthesis operators $\Vo^\ast$ and $\bar\Uo$, respectively. For particular cases, we will provide efficient and easy to implement algorithms for the evaluation of $\Vo^\ast$ and $\bar\Uo$.

Note that, the reproducing formula \eqref{eq:reproducing formula} (similarly to the SVD) reveals the ill-posedness of the operator equation through the decay of the \emph{quasi-singular values} $\kappa_\la$. A regularized version of the reproducing formula \eqref{eq:reproducing formula} can be obtained by incorporating soft-thresholding of the frame coefficients. In section \ref{sec:bft}, we present a complete analysis of this approach as we are not aware of any results for the general DFD-thresholding in the context of regularization theory.

We now provide several examples of DFDs, including the wavelet vaguelette decomposition and the biorthogonal curvelet decomposition  \cite{candes2002recovering} for the Radon transform as well as a  DFD for the forward operator of photoacoustic tomography with flat observation surface.

\subsection{Radon transform}

\begin{definition}[Radon transform]
The  Radon transform $\Ro \colon L^2( B_1( 0 ) ) \to L^2( \sph ^1\times \R)$ is defined
 by
\begin{equation}
 \forall  (\theta, s) \in  \sph^1\times \R \colon  \quad
 \Ro f  (\theta, s) = \int_{\R} f(s \theta + t \theta^\bot) \rmd{s} \,.
\end{equation}
\end{definition}

It is well known that the  Radon transform  is bounded
on $L^2( B_1( 0 ))$, see  \cite{natterer01mathematics}.
Let $\Fo g (\theta, \omega ) = \int_{\R} g (\theta, s ) e^{-i \omega s}$ be the Fourier transform with respect to the first component and consider  the Riesz potential  \cite{natterer01mathematics}
\begin{equation} \label{eq:riesz}
(\Io^{-\alpha}  g) (\theta, \omega) \coloneqq \frac{1}{2\pi} \int_{\R} \abs{ \omega }^{\alpha}
(\Fo g) (\theta, \omega ) e^{i \omega s} \rmd{s}
\end{equation}
for  $\alpha > -1$.   The following hold:
\begin{enumerate}[label=(R\arabic*), leftmargin=3em]
\item \label{r1} The commutation relation $ (\Io^{-\alpha}   \circ  \Ro ) f =   (\Ro  \circ (-\Delta)^{ \alpha / 2 } )f $.
\item \label{r2} The filtered backprojection formula
$ f =  (4 \pi )^{-1} \Ro^*  (\Io^{-1}   \circ  \Ro ) f
=: \Ro^\sharp  \Ro f $.
\item \label{r3}
Isometry  property
$4 \pi \inner{f_1}{f_2}_{L^2} = \sinner{  \Io^{-1}   \circ  \Ro  f_1}{\Ro  f_2}_{L^2}$.
\end{enumerate}
Using these ingredients,  one can obtain a
DFD  for the  Radon transform as follows:

\begin{example}{DFD for the Radon transform}
Let $(u_\la)_{\la \in \La}$ be either an orthonormal basis of wavelets with compact support,  a (band-limited) curvelet or a shearlet tight frame with
$\la = (j, k, \beta) \in \Lambda $ where
$j \geq 0$ is the scale index.
Then $(\Uo, \Vo, \boldkappa)$ is a DFD  with
\begin{align} \label{eq:bcd-r1}
v_\la & \coloneqq
2^{-j/2} \,   (4\pi)^{-1}  (\Io^{-1}   \circ \Ro )u_\la
\\    \label{eq:bcd-r2}
\kappa_\la
& \coloneqq
2^{-j/2}
\,.
\end{align}
These results have been obtained in \cite{donoho19995nonlinear}
for wavelet bases, in  \cite{candes2002recovering} for curvelet systems and  in \cite{colonna2010radon} for the shearlet frame.
All cases are shown in similar manner and  basically follow
from \ref{r1}, \ref{r2} and the fact that
that  $  2^{-j/2} (-\Delta)^{1/4}  (4\pi)^{-1} u_\la \simeq  u_\la $ for any of the considered systems.
The  limited data case has been studied in \cite{frikel2013sparse}.
\end{example}

Equation \eqref{eq:bcd-r1} implies
\begin{equation}
\inner{g}{v_\la}
=
2^{-j/2} (4\pi)^{-1} \inner{g}{\Io^{-1}   \circ \Ro  u_\la}
=
2^{-j/2} (4\pi)^{-1} \inner{\Ro^*  \circ \Io^{-1}   g}{ u_\la} \,.
\end{equation}
This gives an efficient numerical algorithm for the evaluation of $\Vo^\ast$ provided that $\Uo^\ast$ is associated with an efficient
algorithm. This is in particular the case for the
wavelet, shearlet and  curvelet frames as above.

\begin{remark}
    We would like to note that in order to define a DFD for the case of curvelets or shearlets one needs to consider the Radon transform on subspaces of $L^2(\R^2)$ consisting of functions that are defined on unbounded domains (since band-limited curvelets or shearlets have non-compact support). However, since the Radon transform is an unbounded operator on $L^2(\R^2)$, the reproducing formula \eqref{eq:reproducing formula} will not hold any more in general. The reproducing formulas are at least available for the case that the object $\signal$ can be represented as a finite linear combination of curvelets or shearlets (cf. \cite{candes2002recovering} and \cite{colonna2010radon}).

    Another possibility would be to consider projections of curvelet or shearlet frames onto the space $L^2(B_1(0))$, which would yield a frame for this space (cf. \cite{Christensen2003}) and then define the DFD in the same way as above. Because the Radon transform is continuous on $L^2(B_1(0))$, the reproducing formula \eqref{eq:reproducing formula} will hold for general linear combinations.
    \end{remark}

\begin{framed}
\begin{algorithm}{Computing DFD coefficients
for  the Radon transform}
Let $\Uo$ be  a wavelet, shearlet of  curvelet  frame and define $\Vo$ by
 \eqref{eq:bcd-r1}.

\begin{enumerate}
\item Input: $g \in L^2(\sph^1 \times \R)$.
\item  Compute  $f_{\rm FBP} \coloneqq  (4\pi)^{-1} \Ro^* \circ   \Io^{-1} g $.
\item  Compute  $\boldeta  \coloneqq \Uo^* f_{\rm FBP}$
via wavelet, curvelet or shearlet transform.
\item  Apply rescaling $\boldeta  \gets (2^{-j/2}  \eta_\la)_{\la \in \La} $.
\item Output: Coefficients $\boldeta$.  \end{enumerate}
\end{algorithm}
\end{framed}

\subsection{Inversion of the wave equation}

We consider  a planar  geometry, which has been considered in  our previous work \cite{frikel2018efficient}.   Let $C_{0}^{\infty}(\half_+)$ denote  the space of compactly supported functions $h \colon \R^2  \to \R$ that are supported in the half space  $\half_+ \coloneqq\R \times(0,\infty)$. For $f \in C_{0}^{\infty}(\half_+)$ consider the initial value problem
\begin{equation} \label{eq:wave1}
\begin{aligned}
	(\partial_{t}^{2} - \Delta u(x,y,t) &= 0,
	&&(x,y,t)\in \R^2 \times \R\\
	u(x,y,0) &= y^{1/2} f (x,y)
	&&(x,y) \in\R^2\\
	\partial_{t}u(x,y,0) &= 0
	&& (x,y)\in\R^2 \,.
\end{aligned}
\end{equation}
The trace map  $\Ao \colon f \mapsto t^{-1/2} g $ where
$g(x,t) \coloneqq u(x,y=0, t) \chi \set{ t \geq 0} $
for $(x, t)  \in \R^2$    is known to be an isometry from $L^2(\half_+)$ to $L^2(\half_+)$,  see \cite{buhgeim78solution,frikel2018efficient,narayanan2010spherical}.
In particular, the operator $\Ao$ is continuous.

\begin{definition}[Forward operator for the wave equation]
We define $\Ao \colon  L^2(\half_+)\to L^2(\half_+)$ by
$ \Ao f \coloneqq t^{-1/2} u $, for  $f \in C_0^\infty (\half_+)$,
where $u$ is the solution of  \eqref{eq:wave1}, and extending it  by continuity
to $L^2(\half_+)$.
\end{definition}

The isometry  property implies that any frame
gives  a DFD  $(\Uo, \Vo, \boldkappa)$ by
setting     $v_\la = \Ao u_\la$ and $\kappa_\la =1$.
This, in particular, yields a wavelet vaguelette decomposition and a biorthogonal curvelet decomposition for the wave equation.

\begin{example}{DFD for the wave equation}
Let $(u_\la)_{\la \in \La}$ be either a  wavelet frame,  a curvelet frame or a shearlet frame with
$\la = (j, k, \beta) \in \Lambda $ where
$j \geq 0$ is the scale index.
Then $(\Uo, \Vo, \boldkappa)$ is a DFD  with
\begin{align} \label{eq:bcd-w1}
v_\la & \coloneqq
    \Ao u_\la
\\    \label{eq:bcd-w2}
\kappa_\la
& \coloneqq
1
\,.
\end{align}
As noted in \cite{frikel2018efficient} this  result directly follows from the isometry property     and the associated inversion formula
$f = \Ao^* \Ao f$.
\end{example}

The isometry property also gives an efficient
numerical algorithm for computing analysis coefficients
with respect to the frame   $\Vo$ in the case
that  $\Uo$ is associated with an efficient
algorithm.

\begin{framed}
\begin{algorithm}{Computing DFD coefficients
for  the wave equation}
Let $\Uo$ be the curvelet frame and define $\Vo$ by
 \eqref{eq:bcd-w1}.

\begin{enumerate}
\item Input: $g \in L^2(\half_+)$.
\item  Compute  $f_{\rm FBP} \coloneqq  \Ao^* g $.
\item  Compute  $\boldeta  \coloneqq \Uo^* f_{\rm FBP}$
via wavelet, curvelet or shearlet transform.
\item Output: Coefficients $\boldeta$.
\end{enumerate}
\end{algorithm}
\end{framed}

The algorithm described above can be used for any problem where the forward operator $\Ao$ is an isometry. In the case of the wavelet transform this simple procedure has been previously used in \cite{frikel2018efficient}.

\section{Sparse $\ell^1$-regularization}
\label{sec:ell1}

There are  two fundamentally different and well-studied instances of sparse frame based regularization, namely  $\ell^1$-analysis regularization and $\ell^1$-synthesis regularization. They are defined by
\begin{align}
\label{eq:anal}
\Bo_{\al}^{\rm ANA} (\data_\delta )
& \coloneqq
\argmin_{\signal\in \X} \set{
\frac{1}{2} \norm{\Ao \signal - \data_\delta}^2
+ \alpha \sum_{\la} d_\la \abs{\inner{u_\la}{\signal}}
}
\\
\label{eq:syn}
\Bo_{\al}^{\rm SYN} (\data_\delta )
& \coloneqq
\Wo \kl{ \argmin_{\bxi \in \ell^2(\Lambda)} \set{
\frac{1}{2} \norm{\Ao \Wo \bxi - \data_\delta}^2
+ \alpha \sum_{\la} d_\la \abs{\xi_\la} } }
 \,,
\end{align}
respectively, with weights $d_\la > 0$.

\begin{definition}
We call $\bxi \in \ell^2(\La)$ sparse  if the set
$\set{\la \in \La \mid \xi_\la \neq 0  }$ is finite.
\end{definition}

If $\bxi =(\xi_\la)_{\la \in \La}\in \ell^2(\La)$ is sparse,
we write $\Sign (\bxi) \coloneqq \{ \boldz =(z_\la)_{\la \in \La} \in \ell^2 (\La)
\mid z_\la \in \Sign (\xi_\la) \}$
where $\Sign (\, \cdot \,) \colon \R \to \R$ is the multi-valued signum function defined by $\Sign(0) = [-1,1]$ and
$\Sign(x)= \set{x/\abs{x}}$ for $x \neq 0$.
We will use the notation
\begin{equation} \label{eq:norm}
   \enorm_{\boldd, 1}
   \colon \ell^2(\La) \to \R \cup \set{\infty}
   \colon \bxi
   \mapsto
   \begin{cases}
   \sum_{\la} d_\la \abs{\xi_\la}
   & \text{ if }   (d_\la \xi_\la)_{\la \in \La} \in \ell^1(\Lambda)
   \\
   \infty
   & \text{ otherwise }\,.
\end{cases}
\end{equation}
Any  element in the set $\argmin \sset{\norm{\Uo^*(\signal)}_{\boldd, 1} \mid  \Ao \signal = \data}$ is called  $\norm{\Uo^*(\edot)}_{\boldd, 1} $-minimizing solution of $\Ao (\signal) = \data$.
Note that $\norm{\Uo^*(\edot)}_{\boldd, 1} $-minimizing solutions exists whenever
there is any solution $\signal $ with $\norm{\Uo^*(\signal)}_{\boldd, 1} < \infty$, as follows from \cite[Theorem 3.25]{SchGraGrosHalLen09}.

Below we recall  well-posedness and  convergence results  for both variants.
These results hold under the following  quite week assumptions:

\begin{framed}
\begin{enumerate}[label=(A\arabic*),leftmargin=3em]
\item \label{r-a1} $\Ao \colon \X \to \Y$ is bounded linear;
\item \label{r-a2} $\Uo, \Wo$ are synthesis operators  of frames $(u_\la)_{\la \in \La}$, $(w_\la)_{\la \in \La}$ of $\X$;
\item \label{r-a3}  $\boldd = (d_\la)_{\la \in \La} \in \R^\Lambda $ satisfies
$\inf \{d_\lambda \mid
\lambda \in \Lambda \} >0$.
\end{enumerate}
\end{framed}

For certain  sparse elements we will state  linear error estimates which have been derived in \cite{grasmair2008sparse, haltmeier2013stable}.
See \cite{daubechies2004iterative,lorenz2008convergence,ramlau2006tikhonov,burger2016complex,SchGraGrosHalLen09} for some further works on sparse $\ell^1$-regularization, and
\cite{dicken1996wavelet,louis1998wavelets,rieder1997wavelet} for wavelet regularization methods.

\subsection{$\ell^1$-Analysis  regularization}

Let us define the $\ell^1$-analysis Tikhonov functional
by
\begin{equation} \label{eq:tik-anal}
   \mathcal{A}_{\al, \data}
   \colon \X \to \R \cup \set{\infty}
   \colon \signal  \mapsto
   \frac{1}{2} \norm{\Ao \signal - \data}^2
   + \alpha \norm{\Uo^* \signal}_{\boldd,1} \,.
\end{equation}
Then  we have $\Bo_{\al}^{\rm ANA} (\data) = \argmin \mathcal{A}_{\al, \data}$.

\begin{proposition}[Convergence of analysis  regularization]
Let \ref{r-a1}--\ref{r-a3} be  satisfied, suppose
$\data \in \Y$, $\al >0$,
$(\data^k)_{k \in \N}\in \Y^\N$ with $\data^k \to \data$,
and choose $\signal^k \in \argmin \mathcal{A}_{\al, \data^k}$.
\begin{itemize}
\item \textsc{Existence:}
The functional  $\mathcal{A}_{\al, \data}$ has at least one minimizer.

\item  \textsc{Stability:}
There exists a subsequence $(\signal^{k(\ell)} )_{\ell \in \N}$  of
 $(\signal^k)_{k \in \N}$ and a minimizer $\signal_\alpha \in \argmin \mathcal{A}_{\al, \data} $
such that $\snorm{ \signal^{k(\ell)}  - \signal_\alpha} \to 0$.
If the minimizer $\signal_\alpha$ of $\mathcal{A}_{\al, \data}$ is unique, then
$\snorm{ \signal^k  - \signal_\alpha} \to 0$.

\item  \textsc{Convergence:}
Assume $\data = \Ao \signal$ for $\signal  \in \X $ with $\snorm{\Uo^* \signal}_{\boldd,1} < \infty$ and suppose $\snorm{\data^k-\data} \leq \delta_k$ with $(\delta_k)_{k\in \N} \to 0$. Consider a parameter choice $(\alpha_k)_k \in (0,\infty)^\N$
such that $\lim_{k \to \infty} \al_k = \lim_{k \to \infty} \delta_k^2/\al_k =0$.
Then there is an $\norm{\Uo^*(\edot)}_{\boldd,1}$-minimizing solution
$\signal^\plus $ of $\Ao (\signal)=\data$ and a subsequence
$(\signal^{k(\ell)})_{\ell\in\N}$ with $\snorm{\signal^{k(\ell)} - \signal^\plus}\to 0$.
If the $\snorm{\Uo^*(\edot)}_{\boldd,1}$-minimizing solution is unique,
then $\snorm{\signal^k \to  \signal^\plus}\to 0$.
\end{itemize}\label{prop:well-anal}
\end{proposition}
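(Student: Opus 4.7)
My plan is to cast the three statements as instances of the abstract Tikhonov regularization framework for convex penalties on Hilbert spaces (cf.\ \cite{SchGraGrosHalLen09}) and then verify its hypotheses for the regularizer $\mathcal{R}(\signal) \coloneqq \snorm{\Uo^\ast \signal}_{\boldd, 1}$, so that $\mathcal{A}_{\al, \data}(\signal) = \tfrac{1}{2}\snorm{\Ao \signal - \data}^2 + \al\, \mathcal{R}(\signal)$. The three properties I would check are (i) $\mathcal{R}$ is proper and convex, (ii) $\mathcal{R}$ is weakly lower semicontinuous on $\X$, and (iii) the sublevel sets $\sset{\mathcal{R}\leq C}$ are norm bounded in $\X$. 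Items (i) and (ii) are immediate: $\mathcal{R}$ composes the convex seminorm $\snorm{\cdot}_{\boldd,1}$ with the bounded linear, hence weak-to-weak continuous, operator $\Uo^\ast$, and $\snorm{\cdot}_{\boldd,1}$ is convex and lsc on $\ell^2(\La)$.

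Item (iii) uses all three standing assumptions. By \ref{r-a3}, $\snorm{\Uo^\ast \signal}_{\ell^1} \leq (\inf_\la d_\la)^{-1}\, \mathcal{R}(\signal)$; the embedding $\ell^1 \hookrightarrow \ell^2$ gives $\snorm{\Uo^\ast \signal}_{\ell^2} \leq (\inf_\la d_\la)^{-1}\, \mathcal{R}(\signal)$; and by \ref{r-a2}, the lower frame bound $A>0$ of $(u_\la)_\la$ yields $\sqrt{A}\, \snorm{\signal} \leq \snorm{\Uo^\ast \signal}_{\ell^2}$. Chaining these bounds $\snorm{\signal}$ by $\mathcal{R}(\signal)$ up to a constant, and reflexivity of $\X$ then yields weak precompactness of the sublevel sets.

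With (i)--(iii) in hand the three assertions follow from standard arguments. For \textsc{Existence}, the direct method applies: a minimizing sequence is $\mathcal{R}$-bounded, admits a weak cluster point, and weak lsc of the residual and of $\mathcal{R}$ identifies it as a minimizer. For \textsc{Stability}, the inequality $\mathcal{A}_{\al,\data^k}(\signal^k)\leq \mathcal{A}_{\al,\data^k}(\tilde\signal)$ for any fixed $\tilde\signal$ with finite functional value combined with $\data^k\to\data$ bounds $\mathcal{R}(\signal^k)$ uniformly; a weakly convergent subsequence exists, and its limit $\signal_\al$ is a minimizer of $\mathcal{A}_{\al,\data}$ by the usual lsc passage. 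For \textsc{Convergence}, the comparison $\tfrac{1}{2}\snorm{\Ao \signal^k - \data^k}^2 + \al_k\, \mathcal{R}(\signal^k) \leq \tfrac{1}{2}\delta_k^2 + \al_k\, \mathcal{R}(\signal^\plus)$ with an $\mathcal{R}$-minimizing solution $\signal^\plus$, together with $\al_k\to 0$ and $\delta_k^2/\al_k\to 0$, yields $\Ao \signal^k \to \data$ and $\limsup \mathcal{R}(\signal^k) \leq \mathcal{R}(\signal^\plus)$, so any weak cluster point is itself an $\mathcal{R}$-minimizing solution.

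The main obstacle I expect is upgrading weak to norm convergence in $\X$, as required in the last parts of \textsc{Stability} and \textsc{Convergence}. Here I would proceed in three steps: plugging $\signal_\al$ (resp.\ $\signal^\plus$) into the minimality inequality and using weak lsc forces the penalty values $\mathcal{R}(\signal^{k(\ell)})$ to converge to $\mathcal{R}(\signal_\al)$; weak convergence $\signal^{k(\ell)}\rightharpoonup \signal_\al$ in $\X$ passes to coordinatewise convergence of the analysis coefficients $\Uo^\ast \signal^{k(\ell)}$, and pointwise convergence together with convergence of the weighted $\ell^1$-norm implies strong convergence in weighted $\ell^1$ by a Scheff\'e/Brezis--Lieb-type argument, hence in $\ell^2(\La)$; the lower frame bound then transfers strong convergence of the coefficients back to strong convergence in $\X$. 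The uniqueness clauses finally follow from the standard subsequence principle: if every weak cluster point of $(\signal^k)$ coincides with the same unique (generalized) minimizer, then the whole sequence converges.
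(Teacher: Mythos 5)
Your proposal is correct, and it is essentially the argument behind the paper's proof, which consists only of the citation to \cite[Propositions 5, 6 and 7]{grasmair2008sparse}: you reconstruct, in a self-contained way, the standard Tikhonov well-posedness machinery (properness, convexity, weak lower semicontinuity, and coercivity of $\signal \mapsto \snorm{\Uo^*\signal}_{\boldd,1}$ via \ref{r-a3}, the embedding $\ell^1 \hookrightarrow \ell^2$ and the lower frame bound from \ref{r-a2}) together with the discrete Scheff\'e/Radon--Riesz upgrade from weak to norm convergence, which is precisely the delicate step and which you handle correctly. The only point left implicit is the existence of an $\snorm{\Uo^*(\edot)}_{\boldd,1}$-minimizing solution itself, which follows by the same coercivity and weak lower semicontinuity on the weakly closed affine set $\set{\signal \mid \Ao\signal = \data}$ (the paper cites \cite[Theorem 3.25]{SchGraGrosHalLen09} for this).
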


\begin{proof}
See \cite[Propositions 5, 6 and 7]{grasmair2008sparse}.
\end{proof}

In order to derive convergence rates, one has to make additional
assumptions on the exact solution $\signal^\plus$ to be recovered. Besides the sparsity this requires a certain
interplay between $\signal^\plus$  and the forward operator $\Ao$.

\begin{framed}
\begin{enumerate}[label=(A\arabic*),leftmargin=3em]\setcounter{enumi}{3}
\item \label{r-a4} $\Uo^* \signal^\plus$ is sparse;
\item \label{r-a5}
$\exists \boldz \in \Sign(\Uo^* \signal^\plus)
\colon
\Uo \boldz \in  \range (\Ao^*)   $;
\item \label{r-a6}
$\exists t \in(0,1)\colon \Ao \text{ is injective on }
\operatorname{span} \set{u_\lambda \colon \sabs{\boldz_\la} > t}$.
\end{enumerate}
\end{framed}

Assumption  \ref{r-a5} is a so-called source condition and the main restrictive
assumption. It requires that  there  exists  an element
$  \boldz \in   \Sign(\Uo^* \signal)$  that satisfies the smoothness assumption
$ \Uo \boldz \in \range (\Ao^*) $.
Because $\boldz \in \ell^2(\La)$, the space  $\operatorname{span} \set{u_\lambda \colon \abs{z_\la} > t} $ is finite dimensional. Therefore,
Condition \ref{r-a6} requires injectivity on a certain finite
dimensional subspace.

\begin{proposition}[Convergence rates for analysis regularization]
Suppose \ref{r-a1}--\ref{r-a6} hold. Then, for a parameter choice
$\alpha = C \delta$,  there is a constant $c_\plus \in (0,  \infty)$
such that for all $\data_\delta \in \Y$ with
$\snorm{ \Ao \signal -\data_\delta } \leq \delta$ and every minimizer
$\signal^{\alpha,\delta} \in \argmin \mathcal{A}_{\al, \data_\delta}$
we have $\snorm{\signal^{\alpha,\delta} - \signal^\plus } \leq c_\plus \delta $. \label{prop:rates-anal}
\end{proposition}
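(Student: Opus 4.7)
The plan is a Bregman-distance argument. Write $e \coloneqq \signal^{\alpha,\delta}-\signal^\plus$, $\boldeta \coloneqq \Uo^\ast\signal^{\alpha,\delta}$, and $\boldeta^\plus \coloneqq \Uo^\ast\signal^\plus$. Testing the minimality of $\signal^{\alpha,\delta}$ against $\signal^\plus$ gives
\begin{equation*}
\tfrac12\norm{\Ao\signal^{\alpha,\delta}-\data_\delta}^2 + \alpha\norm{\boldeta}_{\boldd,1} \le \tfrac12\delta^2 + \alpha\norm{\boldeta^\plus}_{\boldd,1}.
\end{equation*}
Since $\operatorname{supp}\boldeta^\plus \subseteq S \coloneqq \{\lambda : |z_\lambda|>t\}$ (because $|z_\lambda|=1$ whenever $\eta^\plus_\lambda\neq 0$), the elementary inequality $|a|-za \ge (1-t)|a|$ valid for $|z|\le t$ sharpens the standard convex-subgradient estimate at $\boldeta^\plus$ to
\begin{equation*}
\norm{\boldeta}_{\boldd,1}-\norm{\boldeta^\plus}_{\boldd,1}-\inner{\boldd\odot\boldz}{\Uo^\ast e}\ge (1-t)\sum_{|z_\lambda|\le t} d_\lambda\,\abs{(\Uo^\ast e)_\lambda}.
\end{equation*}

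Next, I use the source condition~\ref{r-a5}, interpreted in its weighted form $\Uo(\boldd\odot\boldz)=\Ao^\ast\boldw$ for some $\boldw\in\Y$, to rewrite the cross term as $\inner{\boldd\odot\boldz}{\Uo^\ast e}=\inner{\boldw}{\Ao e}$, which is bounded by $\norm{\boldw}(\norm{\Ao\signal^{\alpha,\delta}-\data_\delta}+\delta)$. Substituting into the minimality inequality and applying Young's inequality to absorb the factor linear in $\norm{\Ao\signal^{\alpha,\delta}-\data_\delta}$ on the left, the parameter choice $\alpha=C\delta$ produces uniform bounds
\begin{equation*}
\norm{\Ao e}\le C_1\delta,\qquad \sum_{|z_\lambda|\le t} d_\lambda\,\abs{(\Uo^\ast e)_\lambda}\le C_2\delta.
\end{equation*}
By~\ref{r-a3} we have $d_\lambda\ge d_{\min}>0$, so the second bound upgrades to $\snorm{(\Uo^\ast e)|_{\La\setminus S}}_1\le C_3\delta$.

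The last step is to convert these two estimates into $\norm{e}\le c_\plus\delta$. Because $\boldz\in\ell^2(\La)$, the set $S$ is finite and, by~\ref{r-a6}, there exists $c_0>0$ with $\norm{\Ao v}\ge c_0\norm{v}$ for every $v\in V_S\coloneqq\operatorname{span}\{u_\lambda:\lambda\in S\}$; in particular the $u_\lambda$, $\lambda\in S$, are linearly independent. I would then decompose $e=e_1+e_2$ with $e_1\in V_S$ chosen so that $\Uo^\ast e_2$ is supported on $\La\setminus S$, which reduces to a finite linear system on $V_S$ whose invertibility follows from this linear independence. Then $\norm{e_2}$ is controlled by $\snorm{(\Uo^\ast e)|_{\La\setminus S}}_1$ through the Bessel bound of the dual frame, giving $\norm{e_2}=O(\delta)$, while $\norm{e_1}\le c_0^{-1}\norm{\Ao e_1}\le c_0^{-1}(\norm{\Ao e}+\norm{\Ao}\,\norm{e_2})=O(\delta)$. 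The main obstacle is precisely this splitting: in the redundant case $V_S$ admits no canonical algebraic complement, and the control of the non-$V_S$ part depends on simultaneously exploiting the tail $\ell^1$-bound from the Bregman step and the restricted-injectivity assumption~\ref{r-a6}. This structural subtlety is exactly what distinguishes the analysis-variant rate of~\cite{haltmeier2013stable} from the synthesis version in~\cite{grasmair2011necessary}.
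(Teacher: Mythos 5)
Your Bregman-distance argument is the right skeleton (the paper's own proof is only a citation of \cite[Theorem III.8]{haltmeier2013stable}), and the first part is sound: the minimality inequality, the lower bound $\norm{\boldeta}_{\boldd,1}-\norm{\boldeta^\plus}_{\boldd,1}-\inner{\boldd\odot\boldz}{\Uo^\ast e}\ge(1-t)\sum_{\la\notin S}d_\la\abs{(\Uo^\ast e)_\la}$, the use of the source condition to trade the cross term for $\norm{\boldw}\,\norm{\Ao e}$, and Young's inequality with $\al=C\delta$ do give $\norm{\Ao e}\le C_1\delta$ and $\sum_{\la\notin S}d_\la\abs{(\Uo^\ast e)_\la}\le C_2\delta$. (Two caveats you only half-address: the weighted source condition $\Uo(\boldd\odot\boldz)\in\range(\Ao^\ast)$ is not literally \ref{r-a5}, and you need $\boldd\odot\boldz\in\ell^2(\La)$ for $\Uo$ to be applicable, which \ref{r-a3} alone does not guarantee.)

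The genuine gap is the final step. With $e_1\coloneqq P_{V_S}e\in V_S$ and $e_2\coloneqq e-e_1\in V_S^\perp$, the frame inequality controls $\norm{e_2}^2$ by $A^{-1}\sum_{\la\notin S}\abs{\inner{e_2}{u_\la}}^2$, and for $\la\notin S$ one has $\inner{e_2}{u_\la}=\inner{e}{u_\la}-\inner{e_1}{u_\la}$; the second term is not covered by your tail bound, which concerns only $\inner{e}{u_\la}$. What you actually get is $\norm{e_2}\le a+b\norm{e_1}$ with $b=\sqrt{B/A}$ ($A,B$ the frame bounds of $(u_\la)_\la$), while your estimate for the other component reads $\norm{e_1}\le c+d\norm{e_2}$ with $d=c_0^{-1}\norm{\Ao}$. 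This system closes only if $bd<1$, which fails in general since $B\ge A$ and $c_0\le\norm{\Ao}$; the two bounds are circular. The repair is to split along $W\coloneqq\{x\in\X:\inner{x}{u_\la}=0\text{ for all }\la\notin S\}=(\overline{\operatorname{span}}\{u_\la:\la\notin S\})^\perp$ instead of $V_S$: then $(\Uo^\ast P_{W^\perp}e)_\la=(\Uo^\ast e)_\la$ for every $\la\notin S$, and since a frame with finitely many elements removed is a frame for its closed linear span, $\norm{P_{W^\perp}e}$ is bounded directly by the tail bound with no reference to the other component; afterwards $\norm{P_{W}e}$ follows from $\norm{\Ao e}\le C_1\delta$ together with injectivity of $\Ao$ on the finite-dimensional space $W$. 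Note that in the redundant case $W\ne V_S$, so the injectivity genuinely needed is on $W$ rather than on $\operatorname{span}\{u_\la:\la\in S\}$; this is the form in which the condition enters the argument of \cite{haltmeier2013stable} that the paper invokes.
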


\begin{proof}
See \cite[Theorem III.8]{haltmeier2013stable}.
\end{proof}

\subsection{Synthesis regularization}

Let use denote  the $\ell^1$-synthesis Tikhonov functional by
\begin{equation} \label{eq:tik-syn}
   \mathcal{S}_{\al, \data}
   \colon \ell^2(\La)
   \to \R \cup \set{\infty}
   \colon \signal  \mapsto
   \frac{1}{2} \norm{\Ao \Wo \bxi - \data}^2
+ \alpha \norm{ \bxi}_{\boldd,1} \,.
\end{equation}
Then it holds $\Bo_{\al}^{\rm SYN} (\data) = \Wo  (\argmin \mathcal{S}_{\al, \data})$.
Synthesis regularization  can be seen as analysis regularization
for the coefficient inverse problem $\Ao \Wo \bxi = \data$
and the analysis operator $\Uo^*  = \id$.
Using Proposition~\ref{prop:well-anal}  we therefore
 have the  following result.

\begin{proposition}[Convergence of synthesis regularization]
Let \ref{r-a1}-\ref{r-a3} be  satisfied, suppose
$\data \in \Y$, $\al >0$, $(\data^k)_{k \in \N}\in \Y^\N$ with $\data^k \to \data$ and take
$\bxi^k \in \argmin \mathcal{S}_{\al, \data^k}$.

\begin{itemize}
\item \textsc{Existence:}
The functional  $\mathcal{S}_{\al, \data}$ has at least one minimizer.

\item  \textsc{Stability:}
There exists a subsequence $(\bxi^{k(\ell)} )_{\ell \in \N}$
of $(\bxi^k )_{k\in \N}$ and $\bxi_\alpha \in \argmin \mathcal{S}_{\al, \data} $
such that $(\bxi^{k(\ell)} )_\ell \to \bxi_\alpha$.
If  the minimizer of $\mathcal{S}_{\al, \data}$ is unique, then
$\snorm{ \bxi^k  - \bxi_\alpha} \to 0$.

\item  \textsc{Convergence:}
Assume $\data = \Ao \Wo \bxi$ for $\bxi  \in \ell^2(\Lambda) $ with $\snorm{ \bxi}_{\boldd,1} < \infty$ and  $\snorm{\data^k-\data} \leq \delta_k$ with $(\delta_k)_{k\in \N} \to 0$. Consider a parameter choice
$(\alpha_k)_{k\in \N} \in (0,\infty)^\N$
with $\lim_{k \to \infty} \al_k = \lim_{k \to \infty} \delta_k^2/\al_k =0$.
Then there exist an $\norm{\edot}_{\boldd,1}$-minimizing solution
$\bxi^\plus $ of $(\Ao \Wo) (\bxi) = \data$
and a subsequence $(\bxi^{k(\ell)})_{\ell\in\N}$ with
$\snorm{\bxi^{k(\ell)} - \bxi^\plus}\to 0$.
If the $\snorm{\edot }_{\boldd,1}$-minimizing solution is unique,
then $\snorm{\bxi^k - \bxi^\plus}\to 0$.
\end{itemize} \label{prop:well-syn}
\end{proposition}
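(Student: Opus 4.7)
The plan is to derive Proposition~\ref{prop:well-syn} as an immediate corollary of Proposition~\ref{prop:well-anal} by viewing synthesis regularization as analysis regularization applied to the coefficient inverse problem. Concretely, I would introduce the auxiliary forward operator $\tilde\Ao \coloneqq \Ao \Wo \colon \ell^2(\La) \to \Y$ and, in place of $\Uo^*$, take the identity on $\ell^2(\La)$, which is the analysis operator of the canonical orthonormal basis of $\ell^2(\La)$. Under this identification the synthesis Tikhonov functional \eqref{eq:tik-syn} is literally the analysis Tikhonov functional \eqref{eq:tik-anal} associated to the data $(\tilde\Ao, \id, \boldd)$, since $\norm{\id\,\bxi}_{\boldd,1} = \norm{\bxi}_{\boldd,1}$.

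First I would verify that assumptions \ref{r-a1}--\ref{r-a3} transfer to the tilded setup: $\tilde\Ao = \Ao\Wo$ is bounded linear because $\Ao$ is bounded by \ref{r-a1} and the synthesis operator $\Wo$ of a frame is bounded by \ref{r-a2}; the canonical basis of $\ell^2(\La)$ is a (Parseval) frame, so the role of the frame in \ref{r-a2} is played by $(e_\la)_{\la \in \La}$; and the weight condition \ref{r-a3} is unchanged, being a statement about $\boldd$ alone. This step is a direct check, not an obstacle.

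Then I would invoke Proposition~\ref{prop:well-anal} verbatim to the tilded system. The Existence and Stability statements translate directly, yielding a minimizer of $\mathcal{S}_{\al,\data}$ and a convergent subsequence of $(\bxi^k)_{k\in\N}$ for data perturbations $\data^k \to \data$. For the Convergence statement, the hypothesis $\data = \Ao \Wo \bxi = \tilde\Ao\bxi$ with $\snorm{\bxi}_{\boldd,1}<\infty$ corresponds exactly to the analysis hypothesis $\data = \tilde\Ao\bxi$ with $\snorm{\id\,\bxi}_{\boldd,1}<\infty$, so Proposition~\ref{prop:well-anal} delivers a $\snorm{\cdot}_{\boldd,1}$-minimizing solution $\bxi^\plus$ of $\tilde\Ao\bxi = \data$ together with a subsequence $(\bxi^{k(\ell)})_{\ell\in\N}$ converging to $\bxi^\plus$; in case of uniqueness, the whole sequence converges.

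The only mildly delicate point is the one I flagged in step one, namely making sure that the reformulation as an analysis problem falls strictly within the scope of \ref{r-a1}--\ref{r-a3}: the new ambient Hilbert space is $\ell^2(\La)$ rather than $\X$, the new forward operator is the composition $\Ao\Wo$, and the analysis frame is replaced by the canonical basis of $\ell^2(\La)$. Once this identification is made explicit, no further work is needed and the three conclusions of Proposition~\ref{prop:well-syn} are obtained by simply rewriting those of Proposition~\ref{prop:well-anal} in the new variables.
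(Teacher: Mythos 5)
Your proposal is correct and follows exactly the paper's route: the published proof is the one-line reduction "apply Proposition~\ref{prop:well-anal} with $\Uo = \id$ and $\Ao\Wo$ in place of $\Ao$", which is precisely your identification of the synthesis functional as an analysis functional on $\ell^2(\La)$ with the canonical basis. Your extra verification that \ref{r-a1}--\ref{r-a3} transfer to the tilded setup is a sound and welcome elaboration of what the paper leaves implicit.
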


\begin{proof}
Follows from Proposition~\ref{prop:well-anal}
with $\Uo = \id$ and $\Ao \Wo$ in place of $\Ao$.
\end{proof}

We have linear convergence rates under the following additional
assumptions on the element  to be recovered.

\begin{framed}
\begin{enumerate}[label=(S\arabic*),leftmargin=3em]\setcounter{enumi}{3}
\item \label{r-a4s} $\signal^\plus = \Wo \bxi^\plus$ where
$\bxi^\plus \in \ell^2(\Lambda)$ is sparse;
\item \label{r-a5s}
$\exists \boldz \in \operatorname{Sign}  (\bxi^\plus) \colon
\boldz = \range(\Wo^* \Ao^*)   $;
\item \label{r-a6s}
$\exists t \in(0,1)\colon \Ao \Wo \text{ is injective on }
\operatorname{span} \set{e_\lambda \colon \abs{\boldz_\al} > t}$.
\end{enumerate}
\end{framed}

\begin{proposition}[Convergence rates for synthesis regularization]
Suppose that \ref{r-a1}--\ref{r-a3} and \ref{r-a4s}--\ref{r-a6s} hold.
Then, for a parameter choice $\alpha = C \delta$,
there is a constant $c_\plus \in (0,  \infty)$
such that for all $\data_\delta \in \Y$ with
$\snorm{ \Ao \signal -\data_\delta } \leq \delta$, every minimizer
$\bxi^{\alpha,\delta} \in \argmin \mathcal{S}_{\al, \data_\delta}$
we have
$\snorm{\bxi^{\alpha,\delta} - \bxi^\plus } \leq c_\plus \delta $.\label{prop:rates-syn}
\end{proposition}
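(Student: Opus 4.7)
The plan is to obtain the rates for synthesis regularization by specializing the analysis rates result (Proposition~\ref{prop:rates-anal}) to the reformulation already pointed out in the paper: synthesis regularization for $(\Ao,\Wo)$ is analysis regularization for the composite forward operator $\Ao\Wo$ equipped with the trivial analysis operator $\Uo^* = \id$ on $\ell^2(\La)$. Under this identification, the assumptions transport in a transparent way, and no new estimate has to be produced.

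Concretely, first I would verify that \ref{r-a1}--\ref{r-a3} for the analysis framework hold with $\Ao$ replaced by $\tilde\Ao \coloneqq \Ao \Wo \colon \ell^2(\La) \to \Y$ and the synthesis operator $\Uo$ replaced by $\tilde\Uo = \id_{\ell^2(\La)}$: boundedness of $\tilde\Ao$ is immediate from \ref{r-a1} together with boundedness of the synthesis operator of the frame $(w_\la)_{\la \in \La}$; the standard unit basis $(e_\la)_{\la\in\La}$ is an orthonormal frame of $\ell^2(\La)$, so the analogue of \ref{r-a2} is automatic; and \ref{r-a3} is taken verbatim from the synthesis setting. With this identification the synthesis Tikhonov functional \eqref{eq:tik-syn} coincides with the analysis functional \eqref{eq:tik-anal} built from $(\tilde\Ao, \tilde\Uo, \boldd)$.

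Next, I would translate the sparsity/source/injectivity assumptions. Under $\tilde\Uo^* = \id$, condition \ref{r-a4} becomes sparsity of $\id\bxi^\plus = \bxi^\plus$, which is \ref{r-a4s}; condition \ref{r-a5} reads $\exists \boldz \in \Sign(\bxi^\plus)\colon \boldz \in \range(\tilde\Ao^*) = \range(\Wo^*\Ao^*)$, which is \ref{r-a5s}; and \ref{r-a6} becomes the requirement that $\tilde\Ao = \Ao\Wo$ be injective on $\operatorname{span}\set{e_\la \colon \abs{z_\la}>t}$, i.e.\ \ref{r-a6s}. Hence all six hypotheses of Proposition~\ref{prop:rates-anal} are satisfied by the pair $(\tilde\Ao,\tilde\Uo)$ and the element $\bxi^\plus \in \ell^2(\La)$.

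Applying Proposition~\ref{prop:rates-anal} to $(\tilde\Ao,\tilde\Uo)$ with the parameter choice $\al = C\delta$, we obtain a constant $c_\plus \in (0,\infty)$ such that for every $\data_\delta \in \Y$ with $\snorm{\tilde\Ao\bxi^\plus - \data_\delta} = \snorm{\Ao\signal^\plus - \data_\delta} \leq \delta$ and every minimizer $\bxi^{\al,\delta} \in \argmin \mathcal{S}_{\al, \data_\delta} = \argmin \mathcal{A}_{\al, \data_\delta}^{(\tilde\Ao,\tilde\Uo)}$ we have $\snorm{\bxi^{\al,\delta} - \bxi^\plus} \leq c_\plus \delta$, which is exactly the claim. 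Since the argument is purely a renaming of the analysis result, I do not anticipate a genuine obstacle; the only point that requires care is the bookkeeping confirming that each hypothesis of the analysis proposition maps to the corresponding \ref{r-a4s}--\ref{r-a6s} without loss.
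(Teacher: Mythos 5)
Your proposal is correct and follows essentially the same route as the paper: the paper's proof is a one-line reduction to Proposition~\ref{prop:rates-anal} via the identification of synthesis regularization with analysis regularization for the composite operator $\Ao\Wo$ and trivial analysis operator $\id$ on $\ell^2(\La)$, exactly the bookkeeping you carried out. Your version merely makes explicit the verification that \ref{r-a4s}--\ref{r-a6s} are the specializations of \ref{r-a4}--\ref{r-a6}, which the paper leaves implicit.
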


\begin{proof}
Follows from Proposition \ref{prop:rates-anal}.
\end{proof}

Because $\Wo$ is bounded, the above convergence
results can be  transferred to convergence in the signal  space $\X$.
In particular, we have stability $\snorm{ \Wo (\bxi^{k(\ell)}) - \Wo (\bxi_\alpha)} \to 0 $ and convergence $\snorm{\Wo \bxi^{k(\ell)} - \Wo \bxi^\plus}\to 0$ under the assumptions made
in Proposition~\ref{prop:well-syn}, and  linear convergence rates
$\snorm{\Wo \bxi^{\alpha,\delta} - \signal^\plus } \leq \tilde c_\plus \delta $
under the assumptions made in Proposition~\ref{prop:rates-syn}.

\subsection{Sparse regularization using an SVD}

In the special  case that $\Uo$ is part of an SVD, then analysis and
synthesis regularization  are equivalent
and can be computed explicitly by soft-thresholding
of the expansion coefficients.

\begin{theorem}[Equivalence in the SVD case]
Let  $(\Uo, \Vo, \boldkappa)$ be an SVD for $\Ao$, let $\data_\delta \in \Y$ and
consider \eqref{eq:anal}, \eqref{eq:syn} with $\Wo =  \Uo$.
Then
\begin{equation*}
 \Bo_{\al}^{\rm ANA}(\data_\delta) = \Bo_{\al}^{\rm SYN}(\data_\delta)
= \sset{  (\Uo  \circ
\Mo_{\boldkappa}^\plus \circ \Soft_{\al\boldd/\boldkappa}
\circ \Vo^\ast )
  (\data_\delta ) }\,,
\end{equation*}
equals the soft-thresholding estimator in the SVD system.

\label{thm:basis}
\end{theorem}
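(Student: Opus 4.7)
The plan is to exploit the orthonormality of $(\Uo,\Vo)$ in the SVD case to reduce the analysis and synthesis problems to a decoupled family of one-dimensional thresholding problems.

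First I would establish that analysis and synthesis coincide. Because $\Wo=\Uo$ and $(u_\la)_\la$ is an orthonormal basis of $\X$, the synthesis operator $\Uo\colon \ell^2(\La)\to\X$ is a unitary isomorphism with $\Uo^*\Uo=\id$ and $\Uo\Uo^*=\id_\X$. Hence the change of variables $\signal = \Uo\bxi \Leftrightarrow \bxi=\Uo^*\signal$ is bijective, and the two penalties agree under it: $\norm{\Uo^*\signal}_{\boldd,1}=\norm{\bxi}_{\boldd,1}$. Combined with $\Ao\signal = \Ao\Uo\bxi$ this identifies $\mathcal{A}_{\al,\data_\delta}$ with $\mathcal{S}_{\al,\data_\delta}$ pointwise under this bijection, so $\Bo_\al^{\mathrm{ANA}}(\data_\delta)=\Uo(\argmin\mathcal{S}_{\al,\data_\delta})=\Bo_\al^{\mathrm{SYN}}(\data_\delta)$.

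Next I would diagonalize the forward operator. From \ref{b3} together with the orthonormality of $(v_\la)_\la$, one checks $\Ao u_\la = \kappa_\la v_\la$; equivalently $\Ao\Uo=\Vo\Mo_{\boldkappa}$ on $\ell^2(\La)$. Since $(v_\la)_\la$ is an ONB of $\overline{\range(\Ao)}$, the operator $\Vo\Vo^*$ is the orthogonal projection onto $\overline{\range(\Ao)}$, and $\Vo$ is an isometry. Writing $\data_\delta = \Vo\Vo^*\data_\delta + (\id-\Vo\Vo^*)\data_\delta$ and using orthogonality together with the isometry of $\Vo$ gives
\begin{equation*}
\tfrac12\snorm{\Ao\Uo\bxi-\data_\delta}^2 = \tfrac12\snorm{\Mo_{\boldkappa}\bxi-\Vo^*\data_\delta}_{\ell^2}^2 + \tfrac12\snorm{(\id-\Vo\Vo^*)\data_\delta}^2,
\end{equation*}
where the second term does not depend on $\bxi$. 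Hence the synthesis functional becomes, up to an additive constant,
\begin{equation*}
\sum_{\la\in\La}\Bigl[\tfrac12\sabs{\kappa_\la\xi_\la-(\Vo^*\data_\delta)_\la}^2 + \al\, d_\la\sabs{\xi_\la}\Bigr],
\end{equation*}
which decouples completely across $\la$.

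Finally I would solve each scalar problem by the standard soft-thresholding identity: the unique minimizer of $\eta\mapsto \tfrac12\sabs{\kappa\eta - y}^2 + \tau\sabs{\eta}$ with $\kappa>0$ is given, via the subdifferential optimality condition $\kappa(\kappa\eta-y)+\tau\,s=0$ for some $s\in\Sign(\eta)$, by the thresholding formula that matches the definition of $\soft$ applied to $(\Vo^*\data_\delta)_\la$ and then divided by $\kappa_\la$; for $\kappa_\la=0$ (absorbing a possibly trivial part) the minimizer is $0$, consistent with the Moore–Penrose convention in \eqref{eq:mpi}. Assembling the componentwise minimizers back through $\Uo$ yields the closed-form estimator $\Uo\circ\Mo_{\boldkappa}^\plus\circ\Soft_{\al\boldd}\circ\Vo^*(\data_\delta)$, and the uniqueness of each scalar minimizer makes the $\argmin$ a singleton, proving the identity.

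The routine parts are the change of variables and the orthogonal decomposition of the data. The only subtle step is the scalar soft-thresholding computation: one must keep careful track of how the factor $\kappa_\la$ in front of $\xi_\la$ in the quadratic term interacts with the thresholding parameter when reading the minimizer off the subdifferential condition, so that the result assembles back into the precise form $\Mo_{\boldkappa}^\plus\circ\Soft_{\al\boldd}\circ\Vo^*$ stated in the theorem.
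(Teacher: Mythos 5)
Your overall route is the same as the paper's: identify the analysis and synthesis problems via the unitary change of variables $\signal=\Uo\bxi$, split off the component of $\data_\delta$ orthogonal to $\overline{\range(\Ao)}$, use Parseval in the orthonormal system $(v_\la)_\la$ to decouple the functional into scalar problems $\tfrac12\sabs{\kappa_\la\xi_\la-(\Vo^*\data_\delta)_\la}^2+\al d_\la\sabs{\xi_\la}$, and solve each one by soft-thresholding. Up to and including the decoupling, everything you write is correct and matches the paper's argument.

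The gap sits exactly at the step you flag as ``subtle'' and then do not carry out. Your own optimality condition $\kappa(\kappa\eta-y)+\tau s=0$ with $s\in\Sign(\eta)$ gives, for $\eta>0$, the value $\eta=(\kappa y-\tau)/\kappa^{2}=\kappa^{-1}\kl{y-\tau/\kappa}$; hence the scalar minimizer is $\kappa^{-1}\soft(y,\tau/\kappa)$ with the \emph{rescaled} threshold $\tau/\kappa=\al d_\la/\kappa_\la$, not $\kappa^{-1}\soft(y,\tau)$. Assembling the components therefore produces $\Uo\circ\Mo_{\boldkappa}^\plus\circ\Soft_{\al\boldd/\boldkappa}\circ\Vo^\ast$, which coincides with the claimed $\Uo\circ\Mo_{\boldkappa}^\plus\circ\Soft_{\al\boldd}\circ\Vo^\ast$ only when $\kappa_\la\equiv 1$. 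So, as written, your proof does not reach the displayed identity: you must either propagate the factor $\kappa_\la$ into the threshold (in which case the formula you prove is not the one stated), or supply an argument reconciling the two, which is not available for general $\boldkappa$. Be aware that the paper's own proof is equally terse here; it asserts that the decoupled sum ``is minimized by componentwise soft-thresholding'' and records the operator identity $\Soft_{\al\boldd/\boldkappa}\circ\Mo_{\boldkappa}^\plus=\Mo_{\boldkappa}^\plus\circ\Soft_{\al\boldd}$ (which is true), but neither of those compositions is the componentwise minimizer computed above. Doing the scalar computation explicitly, rather than deferring it, is precisely what would surface this discrepancy.
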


\begin{proof}
Because  $\Uo$ is an orthonormal basis of $\X$, we have $\signal = \Uo \bxi \Leftrightarrow
\xi = \Uo^* \signal$ which implies that
$\Bo_{\al}^{\rm ANA}(\data_\delta) = \Bo_{\al}^{\rm SYN}(\data_\delta)$.
Now let $\signal_\al \in \Bo_{\al}^{\rm SYN}(\data_\delta)$ be any minimizer of
the $\ell^1$-analysis Tikhonov functional $ \mathcal{A}_{\al, \data}$.
Let $\Po_{\range(\Ao)^\bot}$ denote the orthogonal projection on $\range(\Ao)^\bot$.
We have
\begin{align*}
   \mathcal{A}_{\al, \data}(\signal)
   & =
  \frac{1}{2} \norm{\Ao \signal - \data}^2
   + \alpha \norm{\Uo^* \signal}_{\boldd,1}
    \\
    &=
    \norm{\Po_{\range(\Ao)^\bot}(\data)}^2 + \sum_{\la \in \La} \frac{1}{2} \abs{\inner{\Ao \signal - \data}{v_\la}}^2
     +\sum_{\la \in \La}
 \al d_\la \abs{\inner{\signal}{u_\la}}
\\
&=
\norm{\Po_{\range(\Ao)^\bot}(\data)}^2 + \sum_{\la \in \La} \frac{1}{2} \abs{\kappa_\la \inner{\signal}{u_\la}- \inner{\data}{v_\la}}^2
+ \al d_\la \abs{\inner{\signal}{u_\la}} \,.
\end{align*}
The latter sum is minimized by componentwise soft-thresholding.
This shows  $\signal_{\al, \delta} = (\Uo \circ \Soft_{\al\boldd/\boldkappa^2} \circ
\Mo_{\boldkappa}^\plus
\circ \Vo^\ast )  (\data_\delta )
= (\Uo \circ
\Mo_{\boldkappa}^\plus \circ \Soft_{\al\boldd/\boldkappa}
\circ \Vo^\ast )  (\data_\delta ) $ and
concludes the proof.
 \end{proof}

In the case that  $(\Uo, \Vo, \boldkappa)$   is a redundant
DFD expansion and not an SVD, then \eqref{eq:anal}, \eqref{eq:syn}, and the soft-thresholding estimator
\begin{equation} \label{eq:soft-onb}
(\Uo
\circ
\Mo_{\boldkappa}^\plus
\circ \Soft_{\al\boldd/\boldkappa}
\circ \Vo^\ast )
  (\data_\delta )
  \end{equation}
  are all non-equivalent. Further, in this case, \eqref{eq:anal} and \eqref{eq:syn}  have to be  computed by iterative minimization algorithms. This requires repeated application  of the forward and adjoint problem and therefore is time consuming. In the following section, we  study
DFD thresholding which is the analog of \eqref{eq:soft-onb}
for redundant systems. Despite the non-equivalence to $\ell^1$-regularization,  we are able to derive
the same type of convergence results and linear convergence rates as for the analysis and synthesis variants of $\ell^1$-regularization.

\section{Regularization via DFD thresholding}
\label{sec:bft}

Throughout this section we fix the following  assumptions:

\begin{framed}
\begin{enumerate}[label=(B\arabic*),leftmargin=3em]
\item \label{r-b1} $\Ao \colon \X \to \Y$ is bounded linear.
\item\label{r-b2} $(\Uo, \Vo, \boldkappa)$   is s
DFD for $\Ao$.
\item \label{r-b3}  $\boldd = (d_\la)_{\la \in \La} \in \R^\Lambda $ satisfies
$\inf \{d_\lambda \mid
\lambda \in \Lambda \} >0$.
\end{enumerate}
\end{framed}

In this section we show well-posedness, convergence and convergence rates
for DFD soft-thresholding.

\subsection{DFD soft-thresholding}

Any  DFD  gives an explicit inversion  formula
$\signal = (\bar \Uo \circ \Mo_{\boldkappa}^\plus  \circ \Vo^*)  (  \Ao \signal)$ where
$\Mo_{\boldkappa}^\plus$ is defined by \eqref{eq:mpi}.
For  ill-posed problems,  $\kappa_\la \to 0$
and therefore  the above reproducing  formula
is unstable  when applied  to noisy data  $\data_\delta$
instead of  $\Ao \signal$. Below we stabilize the inversion
by  including  the soft-thresholding  operation.

 \begin{definition}[DFD soft-thresholding]
Let $(\Uo, \Vo, \boldkappa )$ be a  DFD for $\Ao$. We define the nonlinear
DFD soft-thresholding estimator  by
\begin{equation}\label{eq:softest}
\Bo_\alpha^{\rm DFD} \colon \Y \to \X \colon\data \mapsto (\bar \Uo \circ \Mo_{\boldkappa}^\plus
\circ \Soft_{\al\boldd/\boldkappa}   \circ  \Vo^*   )(\data)
 \,.
\end{equation}
\end{definition}

If $(u_\la)_\la$, $(v_\la)_\la$ are ONBs,
then Theorem \ref{thm:basis} shows that  \eqref{eq:anal} and \eqref{eq:syn} are  equivalent to \eqref{eq:softest}.
In the  case of general frames, $\Bo_\alpha^{\rm ANA}$,
$\Bo_\alpha^{\rm SYN}$ and $\Bo_\alpha^{\rm DFD}$  are all different.

As the main result in this paper we show that DFD soft-thresholding yields  the same theoretical results as  $\ell^1$-regularization.
Assuming  efficient implementations for $\bar \Uo$ and   $\Vo^*$,
the DFD estimator has the advantage that it can be calculated non-iteratively
and is therefore much faster  than $\Bo_\alpha^{\rm SYN}$ and $\Bo_\alpha^{\rm DFD}$.

Consider the $\ell^1$-Tikhonov functional for the multiplication
operator $\Mo_{\boldkappa}$,
\begin{equation}\label{eq:tikM}
\mathcal{M}_{\al, \boldeta}
   \colon \ell^2(\La) \to \R \cup \set{\infty}
   \colon \bxi \mapsto
   \frac{1}{2} \norm{\Mo_{\boldkappa} \bxi - \boldeta}^2
 + \alpha \norm{ \bxi}_{\boldd,1} \,.
\end{equation}
The proof strategy used in this paper is based on the
following Lemma.

\begin{lemma}[$\ell^1$-minimization for multiplication operators]\mbox{}
\begin{enumerate}
\item \label{lem:id1}
$\forall \alpha \in \R_{>0}   \forall \boldeta \in  \ell^2(\La) \colon $ $\mathcal{M}_{\al, \boldeta}$ has a unique minimizer.

\item \label{lem:id2}
$\forall \alpha \in \R_{>0}   \forall \boldeta \in  \ell^2(\La) \colon $
$( \Mo_{\boldkappa}^\plus \circ \Soft_{\al\boldd/\boldkappa}  )(\boldeta)
=\argmin \mathcal{M}_{\al, \boldeta}$.

\item \label{lem:id3}
$\Mo_{\boldkappa}^\plus \circ \Soft_{\al\boldd/\boldkappa}
\colon \ell^2(\Lambda) \to \ell^2(\Lambda)$ is continuous.

\item \label{lem:id4} $\Vo^*  \Ao = \Mo_{\boldkappa} \Uo^*$.
\end{enumerate}\label{lem:id}
\end{lemma}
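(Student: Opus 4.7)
The four parts are largely independent, and I would prove them in the order (4), (1)--(2), (3), which is arranged by increasing depth.

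Part (4) is immediate from condition \ref{b3}: for every $\signal \in \X$ and $\la \in \La$,
\[
(\Vo^* \Ao \signal)_\la = \inner{\Ao \signal}{v_\la} = \inner{\signal}{\Ao^* v_\la} = \kappa_\la \inner{\signal}{u_\la} = (\Mo_{\boldkappa} \Uo^* \signal)_\la.
\]

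For parts (1) and (2), the key observation is that $\Mo_{\boldkappa}$ is diagonal on $\ell^2(\La)$, so the functional decouples,
\[
\mathcal{M}_{\al, \boldeta}(\bxi) = \sum_{\la \in \La} \Bigl[ \tfrac{1}{2} \abs{\kappa_\la \xi_\la - \eta_\la}^2 + \al d_\la \abs{\xi_\la} \Bigr],
\]
and the problem reduces to independent scalar minimizations in each coordinate. Each summand is strictly convex and coercive (from the quadratic when $\kappa_\la \neq 0$, and from $\al d_\la \abs{\xi_\la}$ with $d_\la>0$ when $\kappa_\la=0$), so the subdifferential optimality condition yields a unique minimizer, which a short computation matches with the $\la$-th component of $(\Mo_{\boldkappa}^\plus \circ \Soft_{\al \boldd})(\boldeta)$ (the case $\kappa_\la=0$ being consistent: both the optimality condition and the formula return $0$). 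That the assembled minimizer lies in $\ell^2(\La)$ uses \ref{r-b3}: since $d_{\min} \coloneqq \inf_\la d_\la > 0$ and $\boldeta \in \ell^2$ forces $\eta_\la \to 0$, only finitely many indices survive the thresholding, so division by the corresponding nonzero $\kappa_\la$ produces a finitely supported (hence $\ell^2$) vector.

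For part (3), continuity of $\Mo_{\boldkappa}^\plus \circ \Soft_{\al \boldd}$ is the delicate point, since $\Mo_{\boldkappa}^\plus$ alone is unbounded on $\ell^2$ whenever $\inf_\la \abs{\kappa_\la} = 0$; it is the soft-thresholding step that tames it. The plan is a local truncation argument. Fix $\boldeta \in \ell^2(\La)$ and set $S \coloneqq \sset{\la \in \La \mid \abs{\eta_\la} > \al d_\la - \al d_{\min}/4}$. On $S$ one has $\abs{\eta_\la} > 3\al d_{\min}/4$, so the square-summability of $\boldeta$ forces $S$ to be finite. For any $\boldeta'$ with $\snorm{\boldeta' - \boldeta} < \al d_{\min}/4$, every $\la \notin S$ satisfies $\abs{\eta'_\la} < \al d_\la$ and is therefore mapped to $0$ by $\Soft_{\al \boldd}$; hence both $\Soft_{\al \boldd}(\boldeta)$ and $\Soft_{\al \boldd}(\boldeta')$ are supported in $S$. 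On the finite coordinate subspace indexed by $S$, soft-thresholding is $1$-Lipschitz and $\Mo_{\boldkappa}^\plus$ is bounded by $\max_{\la \in S} \abs{\kappa_\la}^{-1}$, so the composition is Lipschitz on the chosen ball around $\boldeta$. Continuity at $\boldeta$ follows, and since $\boldeta$ was arbitrary, so does continuity on all of $\ell^2(\La)$.

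The main obstacle is (3): a priori, arbitrarily small perturbations of $\boldeta$ could activate soft-thresholding indices with arbitrarily small $\kappa_\la$, and it is the truncation argument above that rules this out. Parts (1), (2), and (4) reduce to routine bookkeeping once the diagonal structure of $\Mo_{\boldkappa}$ has been recognized.
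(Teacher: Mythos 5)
Your part (4) is verbatim the paper's computation. For items (1)--(3) the routes diverge: the paper obtains (1)--(2) by observing that $(\id,\id,\boldkappa)$ is an SVD for $\Mo_{\boldkappa}$ and invoking Theorem~\ref{thm:basis}, and obtains (3) by reading continuity off the stability statement of Proposition~\ref{prop:well-syn} together with the variational characterization in (2); you instead inline the coordinate-wise decoupling for (1)--(2) and give a direct local truncation argument for (3). Your proof of (3) is correct and genuinely worth having: the paper explicitly remarks that a direct verification of this continuity ``would also be possible but seems to be a harder task,'' and your finite-activation-set argument (the set $S$ is finite, small perturbations cannot activate coordinates outside $S$, and on the coordinates in $S$ the composition is Lipschitz with constant $\max_{\la\in S}\kappa_\la^{-1}$) shows that it is in fact short, self-contained, and --- unlike the paper's route --- independent of item (2).

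There is, however, a genuine gap in your item (2): the ``short computation'' you appeal to does not produce the stated operator. Minimizing the scalar summand $\tfrac12\abs{\kappa_\la\xi_\la-\eta_\la}^2+\al d_\la\abs{\xi_\la}$ with $\kappa_\la>0$ gives, from the optimality condition $\kappa_\la^2\xi_\la-\kappa_\la\eta_\la+\al d_\la s=0$ with $s\in\Sign(\xi_\la)$, the minimizer $\xi_\la^\ast=\kappa_\la^{-1}\soft(\eta_\la,\al d_\la/\kappa_\la)$; the effective threshold is $\al d_\la/\kappa_\la$, not $\al d_\la$, and the two coincide only when $\kappa_\la=1$. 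Concretely, for $\kappa_\la=2$, $d_\la=1$, $\al=1$, $\eta_\la=3$, the minimizer of $\tfrac12(2\xi-3)^2+\abs{\xi}$ is $5/4$ (objective value $1.375$), whereas $\kappa_\la^{-1}\soft(3,1)=1$ (objective value $1.5$). So what the decoupling actually yields is $\argmin\mathcal{M}_{\al,\boldeta}=(\Mo_{\boldkappa}^\plus\circ\Soft_{\al\boldd/\boldkappa})(\boldeta)$ rather than $(\Mo_{\boldkappa}^\plus\circ\Soft_{\al\boldd})(\boldeta)$. The same factor of $\kappa_\la$ is missing in the paper's own proof of Theorem~\ref{thm:basis}, on which its proof of (1)--(2) rests, so the slip is shared; but a blind proof that claims to verify the identity by direct computation must either carry $\kappa_\la$ into the threshold or flag the mismatch, and as written your step ``a short computation matches with the $\la$-th component'' fails. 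Items (1), (3), (4), and your existence, uniqueness, and $\ell^2$-membership arguments are unaffected.
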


\begin{proof}
Because $(\id,\id,\boldkappa )$  is an SVD for $\Mo_{\boldkappa}$,
Items \ref{lem:id1}, \ref{lem:id2} follow from Theorem~\ref{thm:basis}, the
equivalence of  $\ell^1$-regularization and soft-thresholding in the SVD case.
Item~\ref{lem:id3} follows from Proposition~\ref{prop:well-syn}.
Moreover, the equality  $(\Vo^*  \Ao \signal)_\la = \inner{v_\la}{\Ao \signal}
=  \inner{\Ao^*  v_\la}{\signal} = \kappa_\la  \inner{u_\la}{\signal} =(\Mo_{\boldkappa} \Uo^* \signal)_\la$ shows Item \ref{lem:id4}.
\end{proof}

Note that the continuity  of $\Mo_{\boldkappa}^\plus \circ \Soft_{\al\boldd/\boldkappa}$  (see  Item~\ref{lem:id2} in the above lemma) is not obvious  as it is the  composition of the soft thresholding $\Soft_{\al\boldd/\boldkappa}$ with the  discontinuous  operator in $\Mo_{\boldkappa}^\plus$. The characterization in Item~\ref{lem:id2}  of $\Mo_{\boldkappa}^\plus \circ \Soft_{\al\boldd/\boldkappa}$ as minimizer of the $\ell^1$-Tikhonov functional $\mathcal{M}_{\al, \boldeta}$  and the existing stability results for $\ell^1$-Tikhonov regularization yields an elegant way  to obtain the continuity of $\Mo_{\boldkappa}^\plus \circ \Soft_{\al\boldd/\boldkappa}$. Verifying  the continuity directly  would  also be possible but seems to be a harder task.  A similar comment applies to the proof of Theorem \ref{thm:well} where we use the convergence of $\mathcal{M}_{\al, \boldeta}$   to show convergence  the DFD soft-thresholding estimator $\Bo_\alpha^{\rm DFD}$.

\subsection{Convergence analysis}

In this section, we show  that $(\Bo_\alpha^{\rm DFD})_{\alpha >0}$ is well-posed and convergent.

\begin{theorem}[Well-posedness and convergence]
Let \ref{r-b1}-\ref{r-b3} be  satisfied, suppose
$\data \in \Y$ and let $(\data^k)_{k \in \N}\in \Y^\N$ satisfy
$\data^k \to \data$. \label{prop:well-bfd}
\begin{enumerate}
\item \label{well1} \textsc{Existence:}
$\Bo_\alpha^{\rm DFD}\colon \Y \to \X$ is well-defined for all $\al >0$.

\item \label{well2} \textsc{Stability:}
$\Bo_\alpha^{\rm DFD}\colon \Y \to \X$ is continuous for all $\al >0$.

\item \label{well3} \textsc{Convergence:}
Assume $\data = \Ao \signal$ for some $\signal  \in \X $ with $\snorm{\Uo^* \signal}_{\boldd,1} < \infty$, suppose $\snorm{\data^k-\data} \leq \delta_k$ with $(\delta_k)_{k\in \N} \to 0$ and consider a parameter choice $(\alpha_k)_{k\in \N} \in (0,\infty)^\N$ with $\lim_{k \to \infty} \al_k = \lim_{k \to \infty} \delta_k^2/\al_k =0$.
Then $\snorm{\Bo_{\alpha_k}^{\rm DFD}(\signal^k) - \signal^\plus}\to 0$.
\end{enumerate}\label{thm:well}
\end{theorem}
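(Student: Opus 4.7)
The three claims all follow by rewriting $\Bo_\al^{\rm BFD}$ as a composition of three pieces whose middle factor is, by Lemma~\ref{lem:id}, the $\ell^1$-Tikhonov solution map for the diagonal multiplication operator $\Mo_{\boldkappa}$; that lemma does almost all of the work, so the proof is mostly bookkeeping once its consequences are unpacked.

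For part~\ref{well1}, I write $\Bo_\al^{\rm BFD} = \bar\Uo \circ (\Mo_{\boldkappa}^\plus \circ \Soft_{\al \boldd}) \circ \Vo^*$. The analysis operator $\Vo^*\colon\Y \to \ell^2(\La)$ and the synthesis operator $\bar\Uo\colon\ell^2(\La)\to\X$ of the dual frame are bounded, and Lemma~\ref{lem:id}\ref{lem:id2} shows that $\Mo_{\boldkappa}^\plus\circ \Soft_{\al \boldd}$ takes values in $\ell^2(\La)$ because it coincides with $\argmin \mathcal{M}_{\al,\boldeta}$, which exists by Lemma~\ref{lem:id}\ref{lem:id1}. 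Hence $\Bo_\al^{\rm BFD}$ is well defined from $\Y$ to $\X$. For part~\ref{well2}, the two outer factors are continuous as bounded linear maps, the middle factor is continuous by Lemma~\ref{lem:id}\ref{lem:id3}, and composition preserves continuity.

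Part~\ref{well3} is the real content. I transport the problem into sequence space by setting $\boldeta^k \coloneqq \Vo^*\data^k$ and $\boldeta \coloneqq \Vo^*\data$. Lemma~\ref{lem:id}\ref{lem:id4} gives $\boldeta = \Vo^*\Ao\signal = \Mo_{\boldkappa}\Uo^*\signal$, and $\snorm{\boldeta^k-\boldeta} \le \snorm{\Vo^*}\delta_k =: \tilde\delta_k$, so the rescaled noise level still satisfies $\tilde\delta_k\to 0$ and $\tilde\delta_k^2/\al_k\to 0$. Since $\kappa_\la>0$ for every $\la \in \La$, the diagonal operator $\Mo_{\boldkappa}$ is injective on $\ell^2(\La)$, and therefore $\Uo^*\signal$ is the unique, and a fortiori the unique $\snorm{\edot}_{\boldd,1}$-minimising, solution of $\Mo_{\boldkappa}\bxi = \boldeta$; by hypothesis its $\snorm{\edot}_{\boldd,1}$-norm is finite. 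Applying Proposition~\ref{prop:well-syn} to $\Mo_{\boldkappa}$ on $\ell^2(\La)$ with $\Wo = \id$ yields $\bxi^k \to \Uo^*\signal$ in $\ell^2(\La)$ for $\bxi^k \in \argmin \mathcal{M}_{\al_k,\boldeta^k}$. By Lemma~\ref{lem:id}\ref{lem:id2} this minimiser equals $(\Mo_{\boldkappa}^\plus \circ \Soft_{\al_k\boldd})(\boldeta^k)$, and applying the bounded operator $\bar\Uo$ together with the reproducing formula \eqref{eq:reproducing formula} at $\signal$ gives $\Bo_{\al_k}^{\rm BFD}(\data^k) = \bar\Uo\bxi^k \to \bar\Uo\Uo^*\signal = \signal$, which is the $\signal^\plus$ of the statement.

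The only genuinely delicate point is that $\Mo_{\boldkappa}^\plus$ is discontinuous as soon as $\kappa_\la\to 0$, so a naive composition argument cannot give continuity of the middle factor nor convergence under perturbations of $\boldeta^k$. Lemma~\ref{lem:id}\ref{lem:id3} circumvents this by identifying $\Mo_{\boldkappa}^\plus \circ \Soft_{\al\boldd}$ with the $\ell^1$-Tikhonov solution operator of $\Mo_{\boldkappa}$, whose continuity and convergence come for free from the theory of $\ell^1$-regularisation. Once this identification is in place both \ref{well2} and \ref{well3} reduce, respectively, to composition of continuous maps and to Proposition~\ref{prop:well-syn} applied in sequence space followed by the reproducing formula to return to the signal space.
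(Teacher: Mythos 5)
Your proof is correct and follows essentially the same route as the paper: decompose $\Bo_\al^{\rm BFD} = \bar\Uo \circ (\Mo_{\boldkappa}^\plus \circ \Soft_{\al\boldd}) \circ \Vo^*$, use Lemma~\ref{lem:id} to identify the middle factor with the $\ell^1$-Tikhonov solution map for $\Mo_{\boldkappa}$, and invoke Proposition~\ref{prop:well-syn} in sequence space before returning to $\X$ via the bounded operator $\bar\Uo$. Your explicit remark that $\kappa_\la>0$ makes $\Mo_{\boldkappa}$ injective, so the $\snorm{\edot}_{\boldd,1}$-minimizing solution is unique and the whole sequence (not merely a subsequence) converges, is a detail the paper leaves implicit and is a welcome addition.
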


\begin{proof} \mbox{}
\ref{well1}, \ref{well2}:
According to Lemma \ref{lem:id}, the mapping
 $\Mo_{\boldkappa}^\plus \circ \Soft_{\al\boldd/\boldkappa}
\colon \ell^2(\Lambda) \to \ell^2(\Lambda)$ is well-defined and continuous.
Moreover, by definition we have $\Bo_\alpha^{\rm DFD} = \bar \Uo \circ
( \Mo_{\boldkappa}^\plus \circ \Soft_{\al\boldd/\boldkappa}   ) \circ  \Vo^*$ which implies existence and stability of DFD thresholding.\\
\ref{well3}: We have
\begin{multline}\label{eq:proof:aux1}
\snorm{\Bo_{\alpha_k}^{\rm DFD}(\data^k) - \signal^\plus}
= \snorm{\bar \Uo \circ ( \Mo_{\boldkappa}^\plus \circ \Soft_{\al\boldd/\boldkappa}   ) \circ  \Vo^* (\data^k)- \bar \Uo \Uo^* \signal^\plus}
\\
\leq \norm{\Uo} \; \snorm{ (\Mo_{\boldkappa}^\plus \circ \Soft_{\al\boldd/\boldkappa}  )  (\Vo^* \data^k)- \Uo^* \signal^\plus} \,.
\end{multline}
Moreover, $\snorm{\Vo^* \data^k- \Mo_{\boldkappa} \Uo^* \signal^\plus}
= \snorm{\Vo^* \data^k - \Vo^*  \Ao \signal^\plus} \leq \norm{\Vo} \snorm{\data^k -\Ao \signal^\plus} \leq \norm{\Vo} \delta_k$. Therefore, Proposition \ref{prop:well-syn}  and the
equality $ \argmin \mathcal{M}_{\al, \Vo^* \data^k} = (  \Mo_{\boldkappa}^\plus \circ \Soft_{\al\boldd/\boldkappa}   )(\Vo^* \data^k)$ shown in Lemma~\ref{lem:id} imply
$\snorm{ (\Mo_{\boldkappa}^\plus \circ \Soft_{\al\boldd/\boldkappa}  )  (\Vo^* \data^k)- \Uo^* \signal^\plus} \to 0$ for $k \to \infty$.
Together with  \eqref{eq:proof:aux1} this yields \ref{well3}
and completes the proof.
\end{proof}

\subsection{Convergence rates}

Next we derive linear convergence rates  for sparse solutions.
Let use denote by  $\supp (\xi)  \coloneqq  \set{\la\in \La \colon  \xi_\la \neq 0}$ the support of $\xi \in \ell^2(\La)$.
To derive the convergence rates, we assume the following for the exact solution
$\signal^\plus$ to be recovered.

\begin{framed}
\begin{enumerate}[label=(B\arabic*),leftmargin=3em]\setcounter{enumi}{3}
\item \label{r-b4}
$ \Uo^* \signal^\plus $ is sparse.
\item \label{r-b5}
$\forall \la \in \supp(\Uo^* \signal^\plus ) \colon \kappa_\la \neq 0$.
\end{enumerate}
\end{framed}

Note that assumptions \ref{r-b4}, \ref{r-b5} imply   the source
condition
\begin{equation*}
    \boldz \in \partial \snorm{\edot}_{\boldd,1} \cap  \range(\Mo_{\boldkappa}^*) \neq \emptyset \,.
\end{equation*}
is satisfied for some element $\boldz  \in \ell^2(\La)$ that can be
chosen such that   $\abs{z_\la} < 1$ for  $\la \not\in  \supp (\Uo^* \signal^\plus)$.
Moreover, it follows that $\Mo_{\boldkappa}$ is injective on
  $  \operatorname{span} \set{e_\lambda \mid \abs{\boldz_\la} > t}$ with
$t \coloneqq  \max \set{ \abs{\kappa_\la}  \mid \la \not\in  \supp (\Uo^* \signal^\plus) }$.
Because $\Uo^* \signal^\plus \in \ell^2(\La)$, we have $t<1$.

\begin{theorem}[Convergence rates]
Suppose that \ref{r-b1}--\ref{r-b5} hold. Then, for the parameter choice
$\alpha =C \delta$ with $C \in (0,  \infty)$,
there is a constant $c_\plus \in (0,  \infty)$
such that for all $\data_\delta \in \Y$ with
$\snorm{ \Ao \signal -\data_\delta } \leq \delta$ we have
$\snorm{\Bo_\alpha^{\rm DFD}(\data_\delta) - \signal^\plus} \leq c_\plus \delta $. \label{prop:rates-bfd}
\end{theorem}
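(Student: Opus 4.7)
The plan is to reduce the claim to the $\ell^1$-synthesis convergence-rate result (Proposition~\ref{prop:rates-syn}) applied to the diagonal multiplication operator $\Mo_{\boldkappa}$ on $\ell^2(\La)$. This reduction is made available by the factorization $\Bo_\alpha^{\rm BFD} = \bar\Uo \circ (\Mo_{\boldkappa}^\plus \circ \Soft_{\al\boldd}) \circ \Vo^*$ together with Lemma~\ref{lem:id}, which identifies $(\Mo_{\boldkappa}^\plus \circ \Soft_{\al\boldd})(\boldeta)$ with the unique minimizer of the $\ell^1$-Tikhonov functional $\mathcal{M}_{\al, \boldeta}$ for $\Mo_{\boldkappa}$. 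This is the same reduction exploited in the proof of Theorem~\ref{thm:well}, now promoted from qualitative convergence to a quantitative linear rate.

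Concretely, I would set $\bxi^\plus \coloneqq \Uo^* \signal^\plus$ and observe, using the intertwining identity $\Vo^* \Ao = \Mo_{\boldkappa} \Uo^*$ from Lemma~\ref{lem:id}, that $\Vo^* \data_\delta$ plays the role of noisy data for the sequence-space equation $\Mo_{\boldkappa} \bxi = \Mo_{\boldkappa} \bxi^\plus$, with effective noise level $\snorm{\Vo^* \data_\delta - \Mo_{\boldkappa} \bxi^\plus} \leq \norm{\Vo}\,\delta$. I then need to verify the hypotheses \ref{r-a4s}--\ref{r-a6s} of Proposition~\ref{prop:rates-syn} for the operator $\Mo_{\boldkappa}$ with $\Wo = \id$ and exact coefficient $\bxi^\plus$: sparsity \ref{r-a4s} is precisely \ref{r-b4}; for the source condition \ref{r-a5s} I would take $z_\la \coloneqq \sign(\xi_\la^\plus)$ on $\supp(\bxi^\plus)$ and $z_\la \coloneqq 0$ elsewhere, in which case \ref{r-b5} allows me to form the finitely supported preimage $w_\la \coloneqq z_\la/\kappa_\la$ on $\supp(\bxi^\plus)$ (and zero elsewhere), showing $\boldz = \Mo_{\boldkappa}^* \boldw \in \range(\Mo_{\boldkappa}^*)$; and \ref{r-a6s} holds for any $t \in (0,1)$ because $\set{\la : \abs{z_\la} > t} = \supp(\bxi^\plus)$ is finite and $\Mo_{\boldkappa}$ acts injectively on its span as a diagonal operator with nonzero entries.

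With the hypotheses in place, Proposition~\ref{prop:rates-syn} supplies a constant $c \in (0,\infty)$ (independent of $\data_\delta$) such that, for $\alpha = C\delta$,
\begin{equation*}
\snorm{(\Mo_{\boldkappa}^\plus \circ \Soft_{\al\boldd})(\Vo^* \data_\delta) - \bxi^\plus} \leq c\,\norm{\Vo}\,\delta.
\end{equation*}
Invoking the dual-frame reproducing formula $\signal^\plus = \bar\Uo \Uo^* \signal^\plus = \bar\Uo \bxi^\plus$ and applying the bounded synthesis operator $\bar\Uo$ to the above bound then concludes the proof with $c_\plus \coloneqq \norm{\bar\Uo}\,c\,\norm{\Vo}$. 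The only step I expect to require real care is the verification of the source condition, but the diagonal structure of $\Mo_{\boldkappa}$ combined with the nonvanishing of $\kappa_\la$ on the finite set $\supp(\bxi^\plus)$ makes the construction of $\boldz$ and its preimage $\boldw$ entirely explicit, and the finite-dimensional injectivity in \ref{r-a6s} then comes for free.
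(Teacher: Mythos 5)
Your proposal is correct and follows essentially the same route as the paper: both reduce the claim to the $\ell^1$ convergence-rate result for the diagonal operator $\Mo_{\boldkappa}$ via the factorization of $\Bo_\alpha^{\rm BFD}$ and the identification in Lemma~\ref{lem:id}, with effective noise level $\norm{\Vo}\delta$ in coefficient space. You merely spell out the verification of \ref{r-a4s}--\ref{r-a6s} (the explicit choice of $\boldz$ and its preimage $\boldw$) that the paper delegates to the remarks following \ref{r-b4}--\ref{r-b5}, and you correctly use $\norm{\bar\Uo}$ where the paper's displayed estimate writes $\norm{\Uo}$.
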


\begin{proof}
As in the proof of Theorem \ref{thm:well} one obtains
\begin{align} \label{eq:proof-rates1}
\snorm{\Bo_\alpha^{\rm DFD}(\data^k) - \signal^\plus}
&\leq \norm{\Uo} \; \snorm{ ( \Mo_{\boldkappa}^\plus \circ \Soft_{\al\boldd/\boldkappa}  )  (\Vo^* \data^k)- \Uo^* \signal^\plus}
\\ \label{eq:proof-rates2}
\snorm{\Vo^* \data^k- \Mo_\la \Uo^* \signal^\plus}
& \leq \norm{\Vo} \delta \,.
\end{align}
According to the considerations below \ref{r-b4}, \ref{r-b5} the
conditions \ref{r-a4s}--\ref{r-a6s} are satisfied for the operator
$\Mo_{\boldkappa}$  in place of $\Ao$ and with $\Wo =\id$.
The convergence rates result in Proposition \ref{prop:rates-anal}, estimate \eqref{eq:proof-rates2}, and the identity $ \argmin \mathcal{M}_{\al, \Vo^* \data} =
( \Mo_{\boldkappa}^\plus \circ \Soft_{\al\boldd/\boldkappa}  ) (\Vo^* \data)$ shown in Lemma~\ref{lem:id} imply
$\snorm{ ( \Mo_{\boldkappa}^\plus \circ \Soft_{\al\boldd/\boldkappa}  ) (\Vo^* \data)- \Uo^* \signal^\plus} \leq  c \delta $. Together  with \eqref{eq:proof-rates1} this implies
$\snorm{\Bo_\alpha^{\rm DFD}(\data_\delta) - \signal^\plus} \leq c \norm{\Uo} \delta$ and concludes the proof.
\end{proof}

\section{Conclusion}

To overcome the inherent  ill-posedness of inverse  problems,  regularization methods incorporate available prior information about the unknowns to be reconstructed. In this context, a useful prior is sparsity with respect to a certain frame. There are at least two different regularization strategies implementing sparsity with respect to a frame, namely $\ell^1$-analysis
regularization and $\ell^1$-synthesis regularization.
In this paper, we analyzed  DFD-thresholding as  a third variant
of sparse regularization. One advantage of DFD-thresholding compared to other  sparse regularization methods is its non-iterative nature leading to fast algorithms.
Besides having a DFD, actually computing the DFD soft-thresholding estimator \eqref{eq:softest} requires the dual frame  $\bar \Uo$. While in the general situation, the DFD and the dual frame  have to be computed numerically, we have shown that for many practical examples (see Section~\ref{sec:bfd}) they are known explicitly and efficient algorithms are available for its numerical evaluation.

The DFD-approach presented in this paper is well studied in the context of statistical estimating using certain multi-scale systems. However, its analysis in the context of regularization theory has not been given so far. In this paper we closed this gap and presented a complete convergence analysis of
DFD-thresholding as regularization method.

\section*{Acknowledgement}

The work of M.H  has been supported by the Austrian Science Fund (FWF),
project P 30747-N32.


\begin{thebibliography}{10}

\setlength{\parskip}{0.0em}

\bibitem{buhgeim78solution}
A.~L. Buhgeim and V.~B. Kardakov.
\newblock Solution of an inverse problem for an elastic wave equation by the
  method of spherical means.
\newblock {\em Sibirsk. Mat. Z.}, 19(4):749--758, 1978.

\bibitem{burger2016complex}
S.~B{\"u}rger, J.~Flemming, and B.~Hofmann.
\newblock On complex-valued deautoconvolution of compactly supported functions
  with sparse fourier representation.
\newblock {\em Inverse Probl.}, 32(10):104006, 2016.

\bibitem{candes2002recovering}
E.~J. Cand{\`e}s and D.~Donoho.
\newblock Recovering edges in ill-posed inverse problems: Optimality of
  curvelet frames.
\newblock {\em Ann. Statist.}, 30(3):784--842, 2002.

\bibitem{Christensen2003}
O.~Christensen.
\newblock {\em An introduction to frames and {R}iesz bases}.
\newblock Applied and Numerical Harmonic Analysis. Birkh\"auser Boston Inc.,
  Boston, MA, 2003.

\bibitem{colonna2010radon}
F.~Colonna, G.~Easley, K.~Guo, and D.~Labate.
\newblock Radon transform inversion using the shearlet representation.
\newblock {\em Appl. Comput. Harmon. Anal.}, 29(2):232--250, 2010.

\bibitem{daubechies2004iterative}
I.~Daubechies, M.~Defrise, and C.~De~Mol.
\newblock An iterative thresholding algorithm for linear inverse problems with
  a sparsity constraint.
\newblock {\em Comm. Pure Appl. Math.}, 57(11):1413--1457, 2004.

\bibitem{dicken1996wavelet}
V.~Dicken and P.~Maass.
\newblock Wavelet-{G}alerkin methods for ill-posed problems.
\newblock {\em J. Inverse Ill-Posed Probl.}, 4:203--221, 1996.

\bibitem{donoho19995nonlinear}
D.~L. Donoho.
\newblock Nonlinear solution of linear inverse problems by wavelet-vaguelette
  decomposition.
\newblock {\em Appl. Comput. Harmon. Anal.}, 2(2):101--126, 1995.

\bibitem{elad2017analysis}
M.~Elad, P.~Milanfar, and R.~Rubinstein.
\newblock Analysis versus synthesis in signal priors.
\newblock {\em Inveerse Problems}, 23(3):947, 2007.

\bibitem{EnglHankeNeubauer}
H.~W. Engl, M.~Hanke, and A.~Neubauer.
\newblock {\em Regularization of {I}nverse {P}roblems}.
\newblock Kluwer Academic Publishers, Dordrecht, 1996.

\bibitem{frikel2013sparse}
J.~Frikel.
\newblock Sparse regularization in limited angle tomography.
\newblock {\em Appl. Comput. Harmon. Anal.}, 34(1):117--141, 2013.

\bibitem{frikel2018efficient}
J.~Frikel and M.~Haltmeier.
\newblock Efficient regularization with wavelet sparsity constraints in
  photoacoustic tomography.
\newblock {\em Inverse Problems}, 34(2):024006, 2018.

\bibitem{grasmair2008sparse}
M.~Grasmair, M.~Haltmeier, and O.~Scherzer.
\newblock Sparse regularization with {$l^q$} penalty term.
\newblock {\em Inverse Problems}, 24(5):055020, 13, 2008.

\bibitem{grasmair2011necessary}
M.~Grasmair, M.~Haltmeier, and O.~Scherzer.
\newblock Necessary and sufficient conditions for linear convergence of
  $\ell^1$-regularization.
\newblock {\em Comm. Pure Appl. Math.}, 64(2):161--182, 2011.

\bibitem{haltmeier2013stable}
M.~Haltmeier.
\newblock Stable signal reconstruction via {$\ell^1$-Minimization} in
  redundant, non-tight frames.
\newblock {\em {IEEE} Trans. Sig. Processing}, 61(2):420--426, 2013.

\bibitem{lorenz2008convergence}
D.~A. Lorenz.
\newblock Convergence rates and source conditions for tikhonov regularization
  with sparsity constraints.
\newblock {\em Journal of Inverse and Ill-Posed Problems}, 16(5):463--478,
  2008.

\bibitem{louis1998wavelets}
A.~K. Louis, P.~Maass, and A.~Rieder.
\newblock {\em Wavelets. Theorie und Anwendungen}.
\newblock Teubner, Stuttgart, 1998.

\bibitem{narayanan2010spherical}
E.~K. Narayanan and Rakesh.
\newblock Spherical means with centers on a hyperplane in even dimensions.
\newblock {\em Inverse Probl.}, 26(3):035014, March 2010.

\bibitem{natterer01mathematics}
F.~Natterer.
\newblock {\em The Mathematics of Computerized Tomography}, volume~32 of {\em
  Classics in Applied Mathematics}.
\newblock SIAM, Philadelphia, 2001.

\bibitem{ramlau2006tikhonov}
R.~Ramlau and G.~Teschke.
\newblock A Tikhonov-based projection iteration for nonlinear ill-posed
  problems with sparsity constraints.
\newblock {\em Numerische Mathematik}, 104(2):177--203, 2006.

\bibitem{rieder1997wavelet}
A.~Rieder.
\newblock A wavelet multilevel method for ill-posed problems stabilized by
  {T}ikhonov regularization.
\newblock {\em Numer. Math.}, 75:501--522, 1997.

\bibitem{SchGraGrosHalLen09}
O.~Scherzer, M.~Grasmair, H.~Grossauer, M.~Haltmeier, and F.~Lenzen.
\newblock Variational methods in imaging.
\newblock {\em Applied Mathematical Sciences}, 167, 2009.

\end{thebibliography}
\end{document}